%
%

\documentclass[12pt,oneside,reqno]{amsart}       

\usepackage{etex}
\reserveinserts{28}  

\usepackage{amssymb,amsthm,amsmath,mathrsfs,comment}
\usepackage{amstext,amsxtra}  
\usepackage{fullpage,colonequals}
\usepackage{bm}       
\usepackage{mathtools} 
\usepackage[all]{xy}
\usepackage{enumitem}

\usepackage{pictex}
\usepackage{multirow}

\usepackage[pdfpagelabels]{hyperref}
\hypersetup{pdftitle={Signature Ranks of Units in Cyclotomic Extensions of Abelian Number Fields},pdfauthor={David S.\ Dummit}} 
\hypersetup{colorlinks=true,linkcolor=blue,anchorcolor=blue,citecolor=blue}


\theoremstyle{plain}

\newtheorem{theorem}{Theorem}

\newtheorem{proposition}{Proposition}

 \newtheorem*{nonumlemma}{Lemma}                              

\newtheorem{corollary}{Corollary}

\theoremstyle{remark}
\newtheorem{remark}{Remark}

\def\ZZ{\mathbb Z}
\def\QQ{\mathbb Q}
\def\RR{\mathbb R}
\def\CC{\mathbb C}
\def\FF{\mathbb F}

\def\epsilon{\varepsilon}
\def\phi{\varphi}

\DeclareMathOperator{\iso}{\simeq}
\DeclareMathOperator{\Gal}{Gal}
\DeclareMathOperator{\Sel}{Sel}
\DeclareMathOperator{\sgn}{sgn}
\DeclareMathOperator{\sign}{sign}

\setcounter{tocdepth}{1}  

\begin{document}

\title[Signature Ranks of Units in Cyclotomic Extensions of Abelian Number Fields]
{Signature Ranks of Units in Cyclotomic Extensions of Abelian Number Fields}

\author{David S.\ Dummit}
\address{David S.\ Dummit, Department of Mathematics, University of Vermont, Lord House, 16 Colchester Ave., Burlington, VT 05405, USA}
\email{dummit@math.uvm.edu}

\author{Evan P.\ Dummit}
\address{Evan P.\ Dummit, School of Mathematical and Statistical Sciences, Arizona State University, P.O.~Box 871804, Tempe, AZ 85287, USA}
\email{Evan.Dummit@asu.edu}

\author{Hershy Kisilevsky}
\address{Hershy Kisilevsky, Department of Mathematics and Statistics and CICMA, Concordia University, 1455 de Maisonneuve  Blvd. West, Montr\'eal, Quebec, H3G 1M8, CANADA}
\email{hershy.kisilevsky@concordia.ca}


\begin{abstract}
We prove the rank of the group of signatures of the circular units 
(hence also the full group of units) of 
$\QQ( \zeta_m)^+$ tends to infinity with $m$. 
We also show the signature rank of the units differs from its maximum possible value
by a bounded amount for all the real subfields of the composite of an abelian field with
finitely many odd prime-power cyclotomic towers.  In particular, for any prime 
$p$ the signature rank of the units of 
$\QQ( \zeta_{p^n})^+$ differs from $\phi(p^n)/2$ by 
an amount that is bounded independent of $n$.  Finally, we show conditionally that
for general cyclotomic fields
the unit signature rank can differ from its
maximum possible value by an arbitrarily large amount.
\end{abstract}

\subjclass[2010]{11R18 (primary), 11R27 (secondary)}

\maketitle

\section{Introduction}

For any positive integer $m$ with $m$ odd or $m$ divisible by 4, let $\zeta_m$ be a primitive
$m$-th root of unity, $\QQ(\zeta_m)$ the corresponding cyclotomic field and
$\QQ(\zeta_m)^+ = \QQ(\zeta_m + \zeta_m^{-1})$ its maximal totally real subfield.
The units in $\QQ(\zeta_m)^+$ 
together with $\zeta_m$ are a subgroup of finite index in the group of units of $\QQ(\zeta_m)$ (this
index is 1 when $m$ is a prime power and 2 otherwise, \cite[Corollary 4.13]{W}).

Under the various $\phi(m)/2$ real embeddings of $\QQ(\zeta_m)^+$, each unit $\epsilon$ of $\QQ(\zeta_m)^+$ has a sign, and the
collection of these signs is called the {\it signature} of $\epsilon$.  The collection of all such unit
signatures is an elementary abelian 2-group, whose rank is called the {\it unit signature rank} of $\QQ(\zeta_m)^+$
(or, by abuse, of $\QQ(\zeta_m)$).  The signature rank measures how many different possible signs arise from
the units and determines the difference between the class number 
and the strict class number.

The purpose of this paper is to prove that the unit signature rank of $\QQ(\zeta_m)^+$ 
tends to infinity with $m$.  We do this by demonstrating an explicit lower bound for the signature 
rank of the subgroup of {\it cyclotomic units} of $\QQ(\zeta_m)^+$.  
We then show that 
the difference between the signature rank of the units 
in the maximal real subfield of the cyclotomic field of $m p_1^{n_1} \dots p_s^{n_s}$-th roots of unity 
(with all $p_i$ odd) and its maximum possible
value is bounded independent of $n_1, \dots , n_s$ (and is constant if all the $n_i$ are sufficiently large).
This in particular provides infinitely many families of cyclotomic fields whose unit signature ranks are `nearly maximal'.
Finally, we show this difference can be arbitrarily large,
conditional on the existence of infinitely many cyclic cubic fields with totally positive fundamental units.

\section{Signatures}

For any totally real field $F$ of degree $n$ over $\QQ$ 
let $F_\RR=F \otimes_{\QQ} \RR \simeq \prod_{\text{$v$ real}} \RR$
where the product is taken over the real embeddings $v$ of $F$.
Define the {\it archimedean signature space} $V_{\infty,F}$ of $F$ to be 
\begin{equation*} 
V_{\infty,F} = F_\RR^* / F_\RR^{*2} \simeq \prod_{\text{$v$ real}} \{\pm 1\} \simeq  \FF_2^{n},
\end{equation*}
where by identifying $\{\pm 1\}$ with the finite field $\FF_2$ of two elements we 
view the multiplicative group $V_{\infty,F}$ as a vector space over $\FF_2$, written additively.

For any $\alpha \in F^*$ and $v:F \hookrightarrow \RR$ a real place of $F$, let $\alpha_v=v(\alpha)$ and 
define the sign of $\alpha_v$ as usual by $\sign(\alpha_v)=\alpha_v/\lvert \alpha_v \rvert \in \{\pm 1\}$.  
Write $\sgn(\alpha_v) \in \FF_2$ for $\sign(\alpha_v)$ when viewed in the additive group $\FF_2$, i.e.,
$\sgn(\alpha_v) = 0$ if $\alpha_v > 0$ and $\sgn(\alpha_v) = 1$ if $\alpha_v < 0$.
The {\it (archimedean) signature map} of $F$ is the homomorphism
\begin{equation*}
\begin{aligned}
\sgn_{\infty,F}: F^* &\to V_{\infty,F} \\
\alpha &\mapsto (\sgn(\alpha_v))_{v}.
\end{aligned}
\end{equation*}

In the case when $F/\QQ$ is Galois and one real embedding of $F$ is fixed, 
we can index the real embeddings of $F$ by the elements $\sigma$ in
$\Gal(F/\QQ)$, and 
\begin{equation*} 
\sgn_{\infty,F} (\alpha) = ( \sgn( \sigma( \alpha) ))_{\sigma \in \Gal(F/\QQ)} ,
\end{equation*}
where $\sgn$ is the sign (viewed as an element of $\FF_2$) in the fixed real embedding.
The element $\sgn_{\infty,F} (\alpha)$ is called the {\it signature} of $\alpha$.

The collection of all the signatures $\sgn_{\infty,F} (\epsilon)$ where $\epsilon$ varies
over the units of $F$  is called the {\it unit signature group} of $F$; the rank of
this subspace of $V_{\infty,F}$ is called the {\it (unit) signature rank} of $F$ and,
as previously mentioned, is a measure of how many different possible sign configurations arise from
the units of $F$.

Define the {\it (unit signature rank) ``deficiency'' of $F$}, denoted $\delta(F)$, to be 
the corank of the unit signature group of $F$ in $V_{\infty,F}$, i.e., 
$[F:\QQ]$ minus the signature rank of the units of $F$.  The deficiency of $F$ is just 
the nonnegative difference between the
unit signature rank of $F$ and its maximum possible value---the deficiency is 0 if and only if there
are units of every possible signature type.  The deficiency is also the rank of the
group of totally positive units of $F$ modulo squares.

\begin{remark}\label{rem:deficiencies}
For any finite extension $L/F$ of totally real fields we have $\delta (F) \le \delta(L)$, a result due to
Edgar, Mollin and Peterson (\cite[Theorem 2.1]{EMP}).  We briefly recall the reason: the intersection of 
the Hilbert class field $H_L$ of $L$ with the
strict (or narrow) Hilbert class field $H_F^{\textup{st}}$ of $F$ is easily seen to be the Hilbert class field 
$H_F$ of $F$ since $L$ is totally real.  
The composite field $H_L H_F^{\textup{st}}$ is a subfield of the strict
Hilbert class field, $H_L^{\textup{st}}$, of $L$ 
and has degree over $H_L$ equal to $[H_F^{\textup{st}} : H_F]$ because  $ H_F =   H_L \cap H_F^{\textup{st}} $.
Since $[H_F^{\textup{st}} : H_F] = 2^{\delta (F)}$ and $[H_L^{\textup{st}} : H_L] = 2^{\delta (L)}$
(see \cite[\S 2]{D-V} for details), the result follows.
$$
\beginpicture
\setcoordinatesystem units <1 pt, 1 pt>
\linethickness= 0.3pt
%
\put {$F$} at  0 0
\put {$L$} at  0 50 
\put {$H_F$} at  50 25      
\put {$H_L$} at  50 75    
\put { ($ =   H_L \cap H_F^{\textup{st}} $) } at 100 25
\put {$H_F^{\textup{st}}$} at  100 50      
\put {$H_L H_F^{\textup{st}}$} at  100 100  
\put {$H_L^{\textup{st}}$} at 155 125             
\putrule from 0 10 to 0 40      
\putrule from 50 35 to 50 65      
\putrule from 100 60 to 100 90      
\setlinear \plot  7.5 5.0  40.0 20.0 /      
\setlinear \plot  7.5 55.0  40.0 70.0 /      
\setlinear \plot  57.5 30.0  90.0 45.0 /      
\setlinear \plot  57.5 80.0  85.0 92.7 /      
\setlinear \plot  117 107.5  145.0 120.4 /      
\endpicture
$$
\end{remark}

\section{Circular Units and Signatures in Cyclotomic Fields}

Suppose now that $F = \QQ(\zeta_m )^+$ is the maximal (totally) 
real subfield of the cyclotomic field $\QQ(\zeta_m)$ of
$m$-th roots of unity (with $m$ odd or divisible by 4), which is of degree $\phi(m)/2$ over $\QQ$. 

We can fix an embedding of $\QQ(\zeta_m)$ into $\CC$ by mapping $\zeta_m$ to  $e^{2 \pi i /m}$, which
also fixes an embedding of $\QQ(\zeta_m)^+$ into $\RR$.

The Galois group $\Gal (\QQ(\zeta_m)/\QQ)$ consists of the automorphisms $\sigma_a$ that map $\zeta_m$ to $\zeta_m^a$
for integers $a$, $1 \le a < m$ relatively prime to $m$.  We identify 
$\Gal (\QQ(\zeta_m)^+/\QQ)$ with $\Gal (\QQ(\zeta_m)/\QQ) / \{ 1, \sigma_{-1} \}$ and take as representatives the
elements $ \overline{\sigma_a} \in \Gal (\QQ(\zeta_m)^+/\QQ)$ with $1 \le a < m/2$ relatively prime to $m$.

To get information on the unit signature rank of $\QQ(\zeta_m)^+$ we consider the signatures of the
subgroup of {\it circular} (or {\it cyclotomic}) {\it units}, denoted $C_{\QQ(\zeta_m )}$, 
which when $m$ is a prime power has a set of generators given by $-1$ and 
the $\phi(m)/2 -1 $ independent elements
\begin{equation} \label{eq:circunits}
\zeta_m^{(1-a)/2} \dfrac{1 - \zeta_m^a}{1 - \zeta_m}
\end{equation}
for $1 < a < m/2$ and $a$ relatively prime to $m$ \cite[Lemma 8.1]{W} 
(for $m$ not a prime power the definition of $C_{\QQ(\zeta_m )}$ is more complicated, see \cite[Chapter 8]{W}).  

The group $C_{\QQ(\zeta_m )}$ is contained in $\QQ(\zeta_m)^+$ and when $m$ is a prime power is  
isomorphic (as an additive abelian group) to $\ZZ / 2 \ZZ \times \ZZ^{\phi(m)/2 - 1}$, 
with $-1$ and the elements in \eqref{eq:circunits} serving as independent generators over $\ZZ$.  

If we choose a particular ordering of the elements of $\Gal(\QQ(\zeta_m)^+ / \QQ)$ and an ordering of a set of
generators for the cyclotomic units, then their corresponding signatures in $\FF_2^{\phi(m)/2}$ define
the rows of a $\phi(m)/2 \times \phi(m)/2$ matrix over $\FF_2$, whose rank is the rank of the subgroup
$ \sgn_{\infty,\QQ(\zeta_m)^+} (C_{\QQ(\zeta_m )})$ of signatures of the cyclotomic units.  

We first consider the case where $m$ is an odd prime power.

\subsection*{The case ${\bf m = p^n}$ for an odd prime ${\bf p}$}

When $m = p^n$ for an odd prime $p$ every primitive $m$-th root of unity is the square of 
another primitive $m$-th root of unity, so
the circular units in \eqref{eq:circunits} can be written in the form
\begin{equation} \label{eq:circunitsoddp}
 \dfrac{\zeta_m^a - \zeta_m^{-a}}{\zeta_m - \zeta_m^{-1}}
\end{equation}
for $1 < a < m/2$ and $a$ relatively prime to $m$.  If we order the elements in
$ \Gal (\QQ(\zeta_m)^+/\QQ) $ by 
$ \overline{\sigma_b}$ with $1 \le b < m/2$ relatively prime to $m$,
then the signature of the element in \eqref{eq:circunitsoddp} is given by the signs
of the elements
\begin{equation*} 
\overline{\sigma_b} \left ( \dfrac{\zeta_m^a - \zeta_m^{-a}}{\zeta_m - \zeta_m^{-1}}  \right ) 
=  
\dfrac{\zeta_m^{ab} - \zeta_m^{-ab}}{\zeta_m^b - \zeta_m^{-b}} , \quad 1 \le b < m/2, \text{$(b,m) = 1$} .
\end{equation*}
Under the embedding $\zeta_m = e^{2 \pi i /m}$ we have

\begin{equation} \label{eq:circunitsab}
\dfrac{\zeta_m^{ab} - \zeta_m^{-ab}}{\zeta_m^b - \zeta_m^{-b}} = \dfrac{  \sin (2 \pi a b /m)    }   { \sin (2 \pi b /m)   }.
\end{equation}
Since $1 \le b < m/2$, the denominator $\sin (2 \pi b /m) $ is positive.  Hence
$\overline{\sigma_b} ( ( \zeta_m^a - \zeta_m^{-a} )/(  \zeta_m - \zeta_m^{-1} ))$ is positive if and only if 
$ \sin (2 \pi a b /m) $ is positive, which happens precisely when the least positive residue of $a b$ modulo $m$
is in $(0, m/2)$.  

It follows that the rank of the subspace $ \sgn_{\infty,{\QQ(\zeta_m )}^+} (C_{\QQ(\zeta_m )})$ of $V_{\infty,\QQ(\zeta_m)^+}$ is 
equal to the rank of a $\phi(m)/2 \times \phi(m)/2$ matrix $C = (c_{a,b})$ over $\FF_2$
(referred to as the {\it circular unit signature matrix})
whose rows are indexed by the elements $a$ relatively prime to $m$ with $1 \le a < m/2 $ and whose columns are 
indexed by the elements $b$ relatively prime to $m$ with $1 \le b < m/2 $, as follows.
The first row of $C$ is $(1,1, \dots, 1)$, corresponding to the signs $(-1,-1, \dots, -1)$ of the element $-1$, 
viewed in the additive group $\FF_2$, so 
\begin{equation*}
c_{1,b}  = 1 ,
\end{equation*}
for $1 \le b < m/2 $, $b$ relatively prime to $m$.  For
$2 \le a < m/2 $, $1 \le b < m/2$, $a$ and $b$ relatively prime to $m$, we have
\begin{equation*}
c_{a,b}  = 
\begin{cases}
0 & \text{if $ a b \ (\text{mod } m)    \in (0, m/2)$ } \\ 
1 & \text{if $ a b \ (\text{mod } m)    \in (m/2, m)$ } \\ 
\end{cases} ,
\end{equation*}
where $a b \ (\text{mod } m)$ is taken to be the least positive residue of $a b$ modulo $m$. 

For computational purposes, it is useful to note the row indexed by $a$ is the $i$-th row of the
matrix where $i = a - \lfloor (a-1)/p \rfloor$ and the $i$-th row is indexed by $a = i  + \lfloor (i-1)/ (p-1) \rfloor$
(and similarly for the numbering of the columns).

If we add the first row of $C$ to the remaining rows (which does not affect the rank of the
matrix), we obtain the {\it modified circular unit signature matrix} $M = (c_{a,b}')$ with
\begin{equation*}
c_{a,b}' = 
\begin{cases}
1 & \text{if $ a b \ (\text{mod } m)    \in (0, m/2)$ , and } \\ 
0 & \text{if $ a b \ (\text{mod } m)    \in (m/2, m)$ ,} \\ 
\end{cases} 
\end{equation*}
for $1 \le a < m/2 $ and $1 \le b < m/2$, $a$ and $b$ relatively prime to $m$ and as before
$a b \ (\text{mod } m)$ is taken to be
the least positive residue of $a b$ modulo $m$ (this matrix appears in \cite{Da} in the case when $m$ is an odd prime).

The entry $c_{2^d,b}'$ of the 
matrix $M$ in the column indexed by $b$ ($b = 1, \dots, m/2$, $b$ prime to $m$) and the row indexed by $2^d$ ($1 \le 2^d < m/2$ )  is 1 if the least
positive residue of $2^d b$ modulo $m$ lies in $(0, m/2)$ and is 0 if the least positive residue lies in $(m/2,m)$.  
Writing $ 2^d b = A m + r $ with an integer $A$ 
and least positive remainder $r$ with $0 \le r < m$,
i.e., $ 2^{d+1} b / m = 2A + (2 r /m)$, it follows that $ r \in (0,m/2)$ implies $\lfloor 2^{d+1} b / m \rfloor = 2A$ and 
$ r \in (m/2,m)$ implies $\lfloor 2^{d+1} b / m \rfloor = 2A +1$.  
As a consequence, the entry of $M$ in the row indexed by $2^d$ and column indexed by $b$ 
is 0 if $ \lfloor 2^{d+1} b / m \rfloor $ is odd and is 1 if $ \lfloor 2^{d+1} b / m \rfloor $ is even.

\begin{nonumlemma} \label{lem:indeppcols}
Suppose $m = p^n$ where $p$ is an odd prime and $n \ge 1$.  Let $k \ge 1$ be any integer with $ m > 2^{k+2} $.  
If $b_0(k) = \lfloor \dfrac{(2^k - 2)m}{2^{k+1}} \rfloor + 1$ and
$b_1(k) = \lfloor \dfrac{(2^k - 2)m}{2^{k+1}} \rfloor + 2$, then
$ \lfloor 2^{d+1} b_0(k) / m \rfloor $ and $ \lfloor 2^{d+1} b_1(k) / m \rfloor $  are both odd for 
$ d = 1,2, \dots, k-1$ and both even for $d = k$.
\end{nonumlemma}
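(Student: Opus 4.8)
The plan is to reduce the whole statement to one elementary two‑sided estimate. First I would rewrite the constant in the definitions of $b_0(k)$ and $b_1(k)$: since $\dfrac{2^k-2}{2^{k+1}} = \dfrac{2^{k-1}-1}{2^k} = \dfrac12 - \dfrac{1}{2^k}$, we have $\left\lfloor \dfrac{(2^k-2)m}{2^{k+1}}\right\rfloor = \left\lfloor \dfrac m2 - \dfrac{m}{2^k}\right\rfloor$. Put $x = \dfrac m2 - \dfrac{m}{2^k}$ and $t = \lfloor x\rfloor$, so that $b_0(k) = t+1$ and $b_1(k) = t+2$. Since $t \le x < t+1$, this gives the bound
$$x \;<\; b_0(k) \;\le\; b_1(k) \;\le\; x+2 ,$$
which is all we need about $b_0(k),b_1(k)$; no property of $m = p^n$ beyond the ambient hypotheses enters here (one could also observe that $x\notin\ZZ$ for $k\ge 2$, as $m$ is odd, but this is not required).

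\textbf{The main estimate.} Next, fix $b \in \{b_0(k), b_1(k)\}$ and an integer $d$ with $1 \le d \le k$. Multiplying the displayed bound by $2^{d+1}/m$ gives
$$2^d - 2^{\,d+1-k} \;<\; \frac{2^{d+1}b}{m} \;\le\; 2^d - 2^{\,d+1-k} + \frac{2^{d+2}}{m}.$$
Now I would invoke the hypothesis $m > 2^{k+2}$ to control the error term: $\dfrac{2^{d+2}}{m} < \dfrac{2^{d+2}}{2^{k+2}} = 2^{\,d-k}$, and since $2^{\,d+1-k} - 2^{\,d-k} = 2^{\,d-k}$ the upper bound improves to
$$2^d - 2^{\,d+1-k} \;<\; \frac{2^{d+1}b}{m} \;<\; 2^d - 2^{\,d-k}.$$
From this one reads off the floor. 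If $1 \le d \le k-1$, then $2^{\,d+1-k}\le 1$, so the left side is strictly greater than $2^d - 1$ (when $d<k-1$ because $2^{\,d+1-k}<1$, and when $d=k-1$ because the inequality is strict even though $2^{\,d+1-k}=1$), while the right side is $2^d - 2^{\,d-k} < 2^d$; hence $\lfloor 2^{d+1}b/m\rfloor = 2^d - 1$, which is odd since $d\ge 1$. If $d = k$, the inequalities read $2^k - 2 < 2^{d+1}b/m < 2^k - 1$, so $\lfloor 2^{d+1}b/m\rfloor = 2^k - 2$, which is even. Applying this with $b = b_0(k)$ and with $b = b_1(k)$ yields the two claims simultaneously.

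\textbf{Obstacles and edge cases.} The only real difficulty is bookkeeping with the inequalities: one must keep the error bound $2^{d+2}/m$ strictly below $1$ (more precisely below $2^{d-k}$) so that no integer is skipped, take care at the boundary index $d = k-1$ where $2^{\,d+1-k}$ equals $1$ exactly (so the strict lower inequality, not a weak one, is what forces the floor to be $2^d-1$), and note that the weak inequality $b_1(k)\le x+2$ can be an equality (namely when $k=1$, so $x\in\ZZ$), which is harmless since $2^{d+2}/m<1$ still holds strictly. It is worth recording the degenerate case $k=1$ explicitly: then the index range $d=1,\dots,k-1$ is empty, $b_0(1)=1$, $b_1(1)=2$, and $\lfloor 4/m\rfloor = \lfloor 8/m\rfloor = 0$ is even because $m > 2^{3}$.
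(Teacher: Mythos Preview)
Your proof is correct and follows essentially the same approach as the paper's: both write $b_i(k)$ as $\dfrac{(2^k-2)m}{2^{k+1}}$ plus a remainder in $(0,2]$, multiply through by $2^{d+1}/m$, and use the hypothesis $m>2^{k+2}$ to trap $2^{d+1}b_i(k)/m$ strictly between the consecutive integers $2^d-1$ and $2^d$ (for $d<k$) or $2^k-2$ and $2^k-1$ (for $d=k$). Your formulation via $x$ and $t=\lfloor x\rfloor$ is just a relabelling of the paper's fractional-part variable $\theta$, and your explicit treatment of the borderline cases $d=k-1$ and $k=1$ is a bit more careful than the paper's but not substantively different.
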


\begin{proof}
Write 
\begin{equation*} 
\dfrac{(2^k - 2)m}{2^{k+1}} = \lfloor \dfrac{(2^k - 2)m}{2^{k+1}} \rfloor + \theta
\end{equation*}
where $0 \le \theta < 1$.  Then
\begin{equation*} 
b_0(k) = \dfrac{2^k - 2}{2^{k+1}} m + (1 -\theta) \quad \text{and} \quad b_1(k) = \dfrac{2^k - 2}{2^{k+1}} m + (2 -\theta),
\end{equation*}
so
\begin{equation*}
\dfrac{ 2^{d+1} b_0(k)}{m}  = 2^d - \dfrac{2}{2^{k-d}}  + \dfrac{2^{d+1}}{m} (1 -\theta) 
\quad \text{and} \quad
\dfrac{ 2^{d+1} b_1(k)}{m}  = 2^d - \dfrac{2}{2^{k-d}}  + \dfrac{2^{d+1}}{m} (2 -\theta).
\end{equation*}
Since $0 < 1 - \theta \le 1$, $1 < 2 - \theta \le 2$, and $m > 2^{k+2}$, when $d = k$ this gives
\begin{equation*} 
\lfloor 2^{k+1} b_0(k) / m \rfloor = \lfloor 2^{k+1} b_1(k) / m \rfloor = 2^k - 2,
\end{equation*}
so $ \lfloor 2^{k+1} b_0(k) / m \rfloor $ and $ \lfloor 2^{k+1} b_1(k) / m \rfloor $  are both even.
For $ 1 \le d < k$, we have 
\begin{equation*} 
2^d - 1 < 2^d - \dfrac{2}{2^{k-d}}  + \dfrac{2^{d+1}}{m} (1 -\theta) < 2^d - \dfrac{2}{2^{k-d}}  + \dfrac{2^{d+1}}{m} (2 -\theta) < 2^d , 
\end{equation*}
so that
\begin{equation*} 
\lfloor 2^{d+1} b_0(k) / m \rfloor = \lfloor 2^{d+1} b_1(k) / m \rfloor = 2^d - 1,
\end{equation*}
and $ \lfloor 2^{d+1} b_0(k) / m \rfloor $ and $ \lfloor 2^{d+1} b_1(k) / m \rfloor $  are both odd, completing the proof of the lemma.
\end{proof}

We can use the lemma to give the following lower bound for the number of independent signatures
for the circular units in this case.

\begin{proposition} \label{prop:pcase}
Suppose $p$ is an odd prime and $n \ge 1$.  Then the rank of the group of signatures of the circular units
in $\QQ(\zeta_{p^n})^+$ is at least $\lfloor \log_2 (p^n) \rfloor - 2$.
\end{proposition}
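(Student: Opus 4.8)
The plan is to bound from below the $\FF_2$-rank of the modified circular unit signature matrix $M$. Since $M$ is obtained from the circular unit signature matrix $C$ by adding the first row to every other row, $\operatorname{rank}_{\FF_2} M = \operatorname{rank}_{\FF_2} C$, and the latter is by construction the rank of $\sgn_{\infty,\QQ(\zeta_{p^n})^+}\bigl(C_{\QQ(\zeta_{p^n})}\bigr)$; so it suffices to show $\operatorname{rank}_{\FF_2} M \ge \lfloor \log_2(p^n)\rfloor - 2$. Write $m = p^n$ and $N = \lfloor \log_2 m\rfloor$, so that $2^N < m < 2^{N+1}$ because $m$ is odd. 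We may assume $N \ge 3$, since otherwise $N - 2 \le 0$ and there is nothing to prove. I would prove the bound by exhibiting an $(N-2)\times(N-2)$ submatrix of $M$ that is lower triangular with every diagonal entry equal to $1$, hence invertible over $\FF_2$.

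For the rows I take those of $M$ indexed by $a = 2, 2^2, \ldots, 2^{N-2}$; these are legitimate row indices, since each $2^k$ is prime to the odd number $m$ and $2^{N-2} < 2^{N-1} < m/2$. For the columns, observe that for each $k = 1, 2, \ldots, N-2$ we have $m > 2^{k+2}$ (as $2^{k+2}\le 2^{N} < m$), so the Lemma applies and yields the consecutive integers $b_0(k)$ and $b_1(k) = b_0(k)+1$. At most one of these two integers is divisible by $p$, so I let $\widetilde b(k) \in \{b_0(k), b_1(k)\}$ be one that is prime to $m$; it is a genuine column index because $1 \le b_0(k) = \lfloor(\tfrac12 - 2^{-k})m\rfloor + 1$ and $b_1(k) \le (\tfrac12 - 2^{-k})m + 2 < m/2$, the last inequality using $m > 2^{k+1}$. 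Finally, since $m > 2^{k+2}$ forces the gap $b_0(k+1) - b_0(k) = \lfloor(\tfrac12 - 2^{-(k+1)})m\rfloor - \lfloor(\tfrac12 - 2^{-k})m\rfloor$ to be at least $2$, the integers satisfy $b_0(1) < b_1(1) < b_0(2) < b_1(2) < \cdots < b_0(N-2) < b_1(N-2)$, so in particular $\widetilde b(1), \ldots, \widetilde b(N-2)$ are pairwise distinct.

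Now I read off the $(N-2)\times(N-2)$ submatrix $M' = \bigl(M_{2^k,\, \widetilde b(j)}\bigr)_{1\le k, j\le N-2}$ using the recipe for $M$: the entry in row $2^d$ and column $b$ equals $0$ or $1$ according as $\lfloor 2^{d+1} b/m\rfloor$ is odd or even. For $k < j$, applying the Lemma with its integer ``$k$'' taken to be $j$ and its index ``$d$'' taken to be $k$ (which lies in $\{1, \ldots, j-1\}$) shows $\lfloor 2^{k+1}\widetilde b(j)/m\rfloor$ is odd, so $M_{2^k,\, \widetilde b(j)} = 0$; and for $k = j$ the Lemma gives $\lfloor 2^{k+1}\widetilde b(k)/m\rfloor = 2^k - 2$, which is even, so $M_{2^k,\, \widetilde b(k)} = 1$. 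Hence $M'$ is lower triangular with $1$'s on the diagonal, $\det M' = 1$ in $\FF_2$, and therefore $\operatorname{rank}_{\FF_2} M \ge N - 2 = \lfloor \log_2(p^n)\rfloor - 2$.

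I expect the linear-algebra core — the triangularity of $M'$ — to be essentially immediate from the Lemma; the only real work is the bookkeeping in the second paragraph, namely checking that a suitable $\widetilde b(k)$ prime to $p$ exists (this is precisely why the Lemma is stated for the two neighbors $b_0(k), b_1(k)$ rather than a single column) and that the chosen rows and columns are valid and pairwise distinct.
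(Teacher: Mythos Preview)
Your proof is correct and follows essentially the same approach as the paper: use the Lemma to show that in the column indexed by (a coprime-to-$p$ choice among) $b_0(k), b_1(k)$, the entries in rows $2, 4, \ldots, 2^k$ read $0, 0, \ldots, 0, 1$, and conclude that the rows indexed by $2, 4, \ldots, 2^{N-2}$ are independent. The only cosmetic difference is packaging---you extract an explicit $(N-2)\times(N-2)$ lower-triangular submatrix (and therefore must verify the column indices $\widetilde b(k)$ are pairwise distinct, which you do), whereas the paper argues incrementally that row $2^k$ is not in the span of rows $2, \ldots, 2^{k-1}$ (so distinctness of the $B(k)$ is not needed there).
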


\begin{proof}
Since $b_0(k)$ and $b_1(k)$ in the lemma differ by 1 and $m = p^n$, at least one is relatively prime to $m$.  It then  
follows from the lemma that for each $k \ge 1$ with $2^{k+2} < m$ there is a $B(k)$,
relatively prime to $m$ and satisfying $1 \le B(k) < m/2$, 
such that $ \lfloor 2^{d+1} B(k) / m \rfloor $ is odd for $ d = 1,2, \dots, k-1$ and even for $d = k$.

By the remarks before the lemma, it follows that for each $k \ge 1$ with $2^{k+2} < m$,  
the entries of the matrix $M$ in the column indexed by $B(k)$ and belonging to the rows indexed by $2,4,8,\dots, 2^k$ 
are $0,0,\dots,0,1$, respectively. 
In particular, this shows that the row of $M$ indexed by $2^k$ is not in the span of the rows indexed by $2,4,\dots,2^{k-1}$.  
Applying this successively for $k = 1,2, \dots, \lfloor \log_2 m \rfloor - 2 $ shows that all these
rows are linearly independent, implying that the rank of $M$ is at least $\lfloor \log_2 m \rfloor - 2$, which proves the
proposition.
\end{proof}

\subsection*{The case ${\bf m = 2^n}$, ${\bf n \ge 2}$}

In this case let $\zeta_{2^{n+1}}$ be a $2^{n+1}$-st root of unity with $\zeta_{2^{n}} = {\zeta_{2^{n+1}}^2}$.  Fix an
embedding into $\CC$ mapping $\zeta_{2^{n+1}}$ to $e^{2 \pi i /2^{n+1}}$ and order the elements of
$ \Gal ({\QQ(\zeta_m )}^+/\QQ) $
as $ \overline{\sigma_b}$ with $b = 3,5, \dots, 2^{n-1} - 1$, $b$ odd.  Then
the conjugates of the elements in \eqref{eq:circunits} are given by
\begin{equation} \label{eq:2powercircunits}
\dfrac{ \zeta_{2^{n+1}}^{ab} - \zeta_{2^{n+1}}^{-ab} } { \zeta_{2^{n+1}}^{b} - \zeta_{2^{n+1}}^{-b} } = \dfrac{\sin ( \pi a b /2^{n} ) } {\sin ( \pi b /2^{n} ) }
\end{equation}
for $1 < a < 2^{n-1}$,  $1 \le b < 2^{n-1}$, with $a$ and $b$ both odd.  The sign of the unit in 
\eqref{eq:2powercircunits} is $+1$ if the least positive residue of $a b$ modulo $2^{n+1}$ lies in $(0,2^n)$ and
is $-1$ if the least positive residue lies in $(2^n, 2^{n+1})$.

In this case the circular unit signature matrix has first row consisting of all 1's 
and the entry in the row indexed by $a$ and column indexed by $b$ is given by 
\begin{equation} \label{eq:sigmatrix2one}
c_{1,b}  = 1 
\end{equation}
for $b$ odd, $1 < b < 2^{n-1}$,  and 
\begin{equation} \label{eq:sigmatrix2two}
c_{a, b}  = 
\begin{cases}
0 & \text{if $ a b \ (\text{mod } 2^{n+1})    \in (0,2^n)$ , and } \\ 
1 & \text{if $ a b \ (\text{mod } 2^{n+1})    \in (2^n, 2^{n+1})$ } \\ 
\end{cases} ,
\end{equation}
for $a$ and $b$ odd, $1 < a < 2^{n-1} $ and $1 \le b < 2^{n-2} $, where
$a b \ (\text{mod } 2^{n+1})$ is taken to be the least positive residue of $a b$ modulo $2^{n+1}$.

An argument similar to that for odd prime powers (here for the
rows indexed by $2^d - 1$ and column indexed by $2^n - 2^{n-k+1} + 1$) shows the rank
of the circular unit signature matrix is at least $n - 2 = \lfloor \log_2 m \rfloor - 2$, but for
this case Weber in 1899 proved the following definitive result.

\begin{proposition} \label{prop:2case}
(Weber, \cite[B, p.~821]{We}) Suppose $n \ge 2$.  Then the rank of the group of signatures of the circular units
in the maximal totally real subfield of the cyclotomic field of $2^n$-th roots of unity is maximal, i.e.,
equal to $2^{n-2}$.

\end{proposition}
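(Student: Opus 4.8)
The plan is to show that the circular unit signature matrix for $m = 2^n$ is nonsingular, equivalently that the modified matrix $M$ (obtained by adding the all-ones first row to the others) has full rank $2^{n-2}$. I would index rows and columns by the odd integers $a, b$ with $1 \le a, b < 2^{n-1}$, with the convention that the row $a = 1$ corresponds to $-1$ and so is the all-ones row, and work throughout in $\FF_2$. The key structural observation, exactly as in the odd prime power case, is that the entry of $M$ in position $(a,b)$ records whether $\lfloor 2ab/2^{n} \rfloor = \lfloor ab/2^{n-1}\rfloor$ is even or odd, i.e. it detects the ``second bit from the top'' of the product $ab$ read modulo $2^{n+1}$. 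So the matrix is really encoding a bilinear-type function of $a$ and $b$ defined by multiplication in $(\ZZ/2^{n+1}\ZZ)^*$, and the problem reduces to a statement about the group ring $\FF_2[(\ZZ/2^{n+1}\ZZ)^*/\{\pm 1\}]$.

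The approach I would take is the classical one (following Weber's computation, as cited): use the fact that $(\ZZ/2^{n+1}\ZZ)^* \cong \{\pm 1\} \times \langle 5 \rangle$, where $5$ has order $2^{n-1}$, so that $\Gal(\QQ(\zeta_{2^n})^+/\QQ)$ is cyclic of order $2^{n-2}$ generated by $\overline{\sigma_5}$. This lets me rewrite the circular unit signature matrix as the matrix of the regular representation of a single element $\xi \in \FF_2[C]$, where $C$ is cyclic of order $2^{n-2}$: namely $\xi = \sum_{i} \epsilon_i \, g^i$ where $g$ generates $C$ and $\epsilon_i \in \FF_2$ is the sign datum $\sgn_{\infty}$ of the circular unit $\zeta_{2^{n+1}}^{(1-5^i)/2}(1-\zeta_{2^n}^{5^i})/(1-\zeta_{2^n})$ under the fixed embedding. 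The rank of a circulant over $\FF_2[C]$ with $|C| = 2^{n-2}$ is $2^{n-2}$ minus the largest $j$ such that $(g-1)^j \mid \xi$ in the local ring $\FF_2[C] \cong \FF_2[t]/(t^{2^{n-2}})$ (with $t = g-1$); hence nonsingularity is equivalent to $\xi \notin (t)$, i.e. to the single congruence $\sum_i \epsilon_i = 1$ in $\FF_2$, which says that the product of all the conjugates of one circular unit — essentially a cyclotomic-unit norm down to $\QQ$ — is negative in the fixed embedding. Wait: I should be a little more careful, because the all-ones row (from $-1$) has to be folded in too; including it changes $\xi$ by adding the norm element $N = 1 + g + \cdots + g^{2^{n-2}-1} = t^{2^{n-2}-1}$, and one checks that the relevant invariant is really whether the augmentation-type quantity vanishes, so the real content is a parity statement about how many of the $\lfloor 5^i / 2^{n-1}\rfloor$-type bits are odd.

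Concretely, the steps in order: (1) reduce to $C = (\ZZ/2^{n+1}\ZZ)^*/\{\pm1\}$ cyclic of order $2^{n-2}$ generated by $g = \overline{\sigma_5}$, and identify the signature matrix of the circular units with the regular representation of an element $\xi \in \FF_2[C]$; (2) identify $\FF_2[C]$ with $\FF_2[t]/(t^{2^{n-2}})$ via $t = g - 1$, so that rank-deficiency of the circulant equals the $t$-adic valuation of $\xi$; (3) compute $\xi \bmod t$, i.e. the augmentation $\varepsilon(\xi) = \sum_i \epsilon_i \in \FF_2$, by evaluating the signs $\sgn(\sin(\pi \cdot 5^i \cdot b / 2^n))$ — or, cleanly, by computing $\prod_i \overline{\sigma_5^i}(u)$ for the basic circular unit $u$, which is (up to roots of unity and a power of $2$ accounted for by the $-1$ generator) the norm $N_{\QQ(\zeta_{2^n})^+/\QQ}(u)$, and showing this norm is negative; (4) conclude $\xi \notin (t)$, so the circulant is invertible and the rank is $2^{n-2}$. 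The main obstacle is step (3): one must pin down the sign of the relevant norm (equivalently the parity of $\#\{i : \lfloor 5^{i+1}\cdot\frac{\text{(something)}}{2^n}\rfloor \text{ odd}\}$), and to do this cleanly it helps to replace the generator $5$ by $3$ or to use the identity $\prod_{i}(1 - \zeta_{2^n}^{5^i}) = \pm\,p$-related product and track the archimedean sign carefully; alternatively one invokes Weber's 1899 computation directly, which is the route the citation suggests. I expect the cleanest self-contained argument to go through the group-ring/circulant reduction and then a short explicit trigonometric sign count for $\varepsilon(\xi)$.
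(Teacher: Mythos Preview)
The paper does not give its own proof of this proposition: it is stated with a citation to Weber, and the paragraph following it only records that Hasse's argument rests on the single fact that the circular unit $(\zeta_{2^{n+1}}^{5} - \zeta_{2^{n+1}}^{-5})/(\zeta_{2^{n+1}} - \zeta_{2^{n+1}}^{-1})$ has norm $-1$ to $\QQ$, and that Garbanati's version uses that over $\FF_2$ the only irreducible representation of a $2$-group is the trivial one.

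Your plan is precisely this Hasse/Garbanati argument in group-ring language. The identification $\FF_2[C]\cong\FF_2[t]/(t^{2^{n-2}})$ is exactly the statement that the group algebra of a cyclic $2$-group over $\FF_2$ is local (equivalently, has only the trivial irreducible), and your step~(3)---showing the augmentation $\varepsilon(\xi)=1$, i.e.\ that the norm of the basic circular unit is negative---is exactly Hasse's input. So the outline is correct and coincides with the proofs the paper cites. The one place worth tidying is the handling of $-1$: rather than folding the all-ones row into $\xi$ (which caused your ``Wait''), it is cleaner to drop $-1$ and work directly with $u=(\zeta_{2^{n+1}}^{5} - \zeta_{2^{n+1}}^{-5})/(\zeta_{2^{n+1}} - \zeta_{2^{n+1}}^{-1})$ and its $2^{n-2}$ Galois conjugates. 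The signature vector of $u$ is then an element of the local ring $\FF_2[C]$; it generates the whole regular module iff it is a unit, and that holds iff its augmentation is $1$, i.e.\ iff $N_{\QQ(\zeta_{2^n})^+/\QQ}(u)=-1$. That norm is a short direct computation (the product $\prod_{b}\sin(5b\pi/2^n)/\sin(b\pi/2^n)$ over odd $1\le b<2^{n-1}$ telescopes to $-1$), after which the proof is complete.
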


This result was generalized by Hasse in \cite{Ha}, whose simpler and more conceptual proof used the fact
that the circular unit $( \zeta_{2^{n+1}}^{5} - \zeta_{2^{n+1}}^{-5} ) / ( \zeta_{2^{n+1}} - \zeta_{2^{n+1}}^{-1} ) $ in
${\QQ(\zeta_{2^n} )}^+$ has norm $-1$ and showed the signatures of its Galois conjugates generate 
a group of maximal signature rank (see \cite[pp.~28-29]{Ha}).
A nice proof of this, using the fact that over $\FF_2$ the only irreducible representation of a 2-group is
the trivial representation, can be found in \cite{Ga}.  Another nice proof of Weber's result can be found in \cite{D}.

\begin{remark}
Unlike the situation for the 2-power cyclotomic fields, not every possible signature type occurs for the circular units
in general cyclotomic fields, even for $m = p$ an odd
prime (for example, in the case $p = 29 $ the circular unit signature group has rank 11, not the maximal possible
rank of 14 \cite[Appendix I, p.~ 70]{Da}). 
\end{remark}

\section{Signatures in composites of extensions}

Propositions  \ref{prop:pcase} and \ref{prop:2case} already imply that the signature rank of the
units of ${\QQ(\zeta_m )}^+$ tends to infinity with $m$ (since ${\QQ(\zeta_{p^n} )}^+ \subset {\QQ(\zeta_{m} )}^+$ if $p^n$ divides $m$ and
the largest prime power divisor of $m$ tends to infinity as $m$ tends to infinity), but this can be
made more precise using the following result, which may be of independent interest.

Suppose $F/\QQ$ and $F'/\QQ$ are two totally real Galois extensions of $\QQ$ with $F \cap F' = \QQ$.
Then the composite field $F F'$ has Galois group  $\Gal(FF'/\QQ) = \Gal(F/\QQ) \times \Gal(F'/\QQ)$,
and independent signatures in $F$ and $F'$ produce essentially independent signatures in $F F'$:

\begin{proposition} \label{prop:compositeranks}
With $F$ and $F'$ as above, suppose $\alpha_1, \dots, \alpha_r$ are nonzero elements of $F$ whose signatures 
$\sgn_{\infty,F}(\alpha_i)$, $i = 1,\dots,r$ are linearly independent in the
$\FF_2$-vector space $V_{\infty,F}$.  Suppose similarly that $\beta_1, \dots, \beta_s$ are 
nonzero elements of $F'$ whose signatures $\sgn_{\infty,F'}(\beta_j)$, $j = 1, \dots, s$,  are linearly independent in the
$\FF_2$-vector space $V_{\infty,F'}$.  
Then the dimension of the space generated by the signatures of $\alpha_1, \dots, \alpha_r,\beta_1, \dots, \beta_s$ 
in the $\FF_2$-vector space $V_{\infty,FF'}$ is $r + s$ unless the subgroups generated by $\alpha_1, \dots, \alpha_r$ and
by $\beta_1, \dots, \beta_s$ both contain totally negative elements, in which case the dimension is $r + s - 1$.  
\end{proposition}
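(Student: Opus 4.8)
\emph{Proof plan.} The plan is to coordinatize the signature space of $FF'$ using the product structure of its Galois group and then reduce the assertion to an elementary computation with $\FF_2$-vectors. First I would fix a real embedding of $FF'$ extending the chosen real embeddings of $F$ and of $F'$; since $F$ and $F'$ are Galois with $F\cap F'=\QQ$, restriction gives $\Gal(FF'/\QQ)=\Gal(F/\QQ)\times\Gal(F'/\QQ)$, so the real places of $FF'$ are indexed by pairs $(\sigma,\tau)$ with $\sigma\in\Gal(F/\QQ)$ and $\tau\in\Gal(F'/\QQ)$, and $V_{\infty,FF'}\simeq\FF_2^{\,[F:\QQ]\cdot[F':\QQ]}$ with coordinates so indexed. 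For $\alpha\in F^*$ the automorphism $(\sigma,\tau)$ acts on $\alpha$ through $\sigma$ alone, so the $(\sigma,\tau)$-coordinate of $\sgn_{\infty,FF'}(\alpha)$ is the $\sigma$-coordinate of $\sgn_{\infty,F}(\alpha)$; likewise for $\beta\in F'^{*}$. Writing $v_i=\sgn_{\infty,F}(\alpha_i)\in V_{\infty,F}$, $w_j=\sgn_{\infty,F'}(\beta_j)\in V_{\infty,F'}$, and letting $\mathbf{1}$ denote the all-ones vector (the signature of a totally negative element) in each of $V_{\infty,F}$ and $V_{\infty,F'}$, this says $\sgn_{\infty,FF'}(\alpha_i)=v_i\otimes\mathbf{1}$ and $\sgn_{\infty,FF'}(\beta_j)=\mathbf{1}\otimes w_j$ under the identification $V_{\infty,FF'}\simeq V_{\infty,F}\otimes_{\FF_2}V_{\infty,F'}$.

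Next I would compute the dimension of the span of these $r+s$ vectors by determining the space of $\FF_2$-linear relations among them. A relation $\sum_i a_i(v_i\otimes\mathbf{1})+\sum_j b_j(\mathbf{1}\otimes w_j)=0$ rearranges to $v\otimes\mathbf{1}=\mathbf{1}\otimes w$ with $v=\sum_i a_iv_i$ and $w=\sum_j b_jw_j$. Comparing the $(\sigma,\tau)$-coordinate on the two sides gives $v_\sigma=w_\tau$ for all $\sigma$ and $\tau$, which forces $v$ and $w$ to be constant vectors sharing a common constant value; hence either $v=w=0$, or $v=\mathbf{1}$ and $w=\mathbf{1}$. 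In the first case linear independence of $v_1,\dots,v_r$ (resp.\ of $w_1,\dots,w_s$) forces all $a_i=0$ (resp.\ all $b_j=0$), the trivial relation. Thus a nontrivial relation exists precisely when $\mathbf{1}\in\langle v_1,\dots,v_r\rangle$ \emph{and} $\mathbf{1}\in\langle w_1,\dots,w_s\rangle$, and in that case independence again makes the coefficients $a_i,b_j$ unique, so the relation space is exactly one-dimensional.

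Finally I would translate the condition back to the stated one. The subgroup of $V_{\infty,F}$ generated by $v_1,\dots,v_r$ is the image under $\sgn_{\infty,F}$ of the subgroup of $F^*$ generated by $\alpha_1,\dots,\alpha_r$, and an element is totally negative exactly when its signature is $\mathbf{1}$; so $\mathbf{1}\in\langle v_1,\dots,v_r\rangle$ if and only if the subgroup generated by $\alpha_1,\dots,\alpha_r$ contains a totally negative element, and similarly for the $\beta_j$. Since the dimension of the span is $(r+s)$ minus the dimension of the relation space, it equals $r+s$ unless both subgroups contain totally negative elements, in which case it equals $r+s-1$, as claimed. The only genuinely substantive point is the coordinate comparison showing that $v\otimes\mathbf{1}=\mathbf{1}\otimes w$ forces $v$ and $w$ to be constant; everything else is bookkeeping with the identification of embeddings, so I do not anticipate a real obstacle.
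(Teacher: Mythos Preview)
Your proof is correct and follows essentially the same approach as the paper's: both reduce a nontrivial relation to the observation that, in the coordinates indexed by $(\sigma,\tau)\in\Gal(F/\QQ)\times\Gal(F'/\QQ)$, the equality $v_\sigma=w_\tau$ for all $\sigma,\tau$ forces $v$ and $w$ to be constant (the paper phrases this as ``$\sigma(\alpha)\tau(\beta)>0$ for all $\sigma,\tau$ forces $\alpha$ and $\beta$ both totally negative''). Your tensor-product packaging $V_{\infty,FF'}\simeq V_{\infty,F}\otimes V_{\infty,F'}$ is a tidy reformulation but not a genuinely different argument.
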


\begin{proof}
If the signatures in $V_{\infty,FF'}$ of $\alpha_1, \dots, \alpha_r, \beta_1, \dots, \beta_s$ are linearly 
dependent, then there is an element ${\alpha_1}^{a_1} \dots {\alpha_r}^{a_r} {\beta_1}^{b_1} \dots {\beta_s}^{b_s}$ in $FF'$
with $a_1,\dots,a_r,b_1, \dots, b_s \in \{0,1\}$, not all 0, that is totally positive.  Assume without loss
that at least one of $a_1,\dots,a_r$ is not 0, and let
$\alpha = {\alpha_1}^{a_1} \dots {\alpha_r}^{a_r}$ and $\beta = {\beta_1}^{b_1} \dots {\beta_s}^{b_s}$.  Then
$\sigma \tau (\alpha \beta) = \sigma (\alpha) \tau (\beta)$ is positive for every $\sigma \in  \Gal(F/\QQ)$ and every
$\tau \in \Gal(F'/\QQ)$.
Since the signatures $\sgn_{\infty,F}(\alpha_i)$, $i = 1,\dots,r$ are linearly independent and some $a_i$ is nonzero,  there exists a 
$\sigma_0$ such that $\sigma_0 (\alpha)$ is negative.  This implies $\tau (\beta)$ is negative for every $\tau$, i.e.,
$\beta$ is totally negative. Then, since there is a $\tau_0$ with $\tau_0 (\beta)$ negative, it follows that also
$\sigma (\alpha)$ is negative for every $\sigma$, i.e., $\alpha$ is totally negative.  Hence there is at most
one nontrivial relation among the signatures of $\alpha_1, \dots, \alpha_r, \beta_1, \dots, \beta_s$ in $V_{\infty,FF'}$---this
nontrivial relation occurs if and only if $(1,1,\dots,1) \in V_{\infty,F}$ is in the $\FF_2$-space spanned 
by $\sgn_{\infty,F}(\alpha_i)$, $i = 1,\dots,r$
and $(1,1,\dots,1) \in V_{\infty,F'}$ is in the $\FF_2$-space spanned by $\sgn_{\infty,F'}(\beta_j)$, $j = 1,\dots,s$, completing
the proof.
\end{proof}

\section{The signature rank of the units in $\QQ(\zeta_m)$ for general $m$}

If we combine Propositions \ref{prop:pcase} and \ref{prop:2case} on the signature ranks in the prime power
case with Proposition \ref{prop:compositeranks} we obtain the following result.

\begin{theorem} \label{thm:1}
Suppose the positive integer $m$ is odd or is divisible by 4.   
Then the rank of the group of signatures of the group of circular units 
in $\QQ(\zeta_m)^+$ is at least
$\log_2 (m) - 4 \omega (m) +1 $, where $\omega(m)$ is the number of distinct prime factors of $m$. 
In particular, the signature rank of the units in $\QQ(\zeta_m)^+$ 
tends to infinity with $m$.  
\end{theorem}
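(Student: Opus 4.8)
The plan is to write $m = 2^{n_0} p_1^{n_1} \cdots p_t^{n_t}$ (with $n_0 = 0$ or $n_0 \ge 2$, and the $p_i$ odd primes), so that $\QQ(\zeta_m)^+$ is the maximal real subfield of the composite of the cyclotomic fields $\QQ(\zeta_{2^{n_0}}), \QQ(\zeta_{p_1^{n_1}}), \dots, \QQ(\zeta_{p_t^{n_t}})$. Since the cyclotomic units of $\QQ(\zeta_m)^+$ contain, in particular, the cyclotomic units of each $\QQ(\zeta_{q^{n}})^+$ pulled back along the inclusion $\QQ(\zeta_{q^n})^+ \subset \QQ(\zeta_m)^+$ (for $q^n$ one of the prime-power factors $2^{n_0}, p_1^{n_1}, \dots, p_t^{n_t}$), it suffices to bound below the rank of the signature group generated by these. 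First I would invoke Propositions \ref{prop:pcase} and \ref{prop:2case} to record that the circular unit signature rank in $\QQ(\zeta_{q^n})^+$ is at least $\lfloor \log_2(q^n) \rfloor - 2$ in each prime-power case (the $2$-power case even gives the exact maximum $2^{n_0 - 2} \ge n_0 - 2$, so the uniform bound $\lfloor \log_2(q^n)\rfloor - 2$ holds across all prime powers).

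Next I would apply Proposition \ref{prop:compositeranks} iteratively. The real subfields $\QQ(\zeta_{q^n})^+$ for the distinct prime powers $q^n \mid m$ are pairwise linearly disjoint over $\QQ$ (their conductors are pairwise coprime), so at each step the composite of the previously handled fields is linearly disjoint from the next one. Proposition \ref{prop:compositeranks} then says that combining a signature-rank-$r$ family in one factor with a signature-rank-$s$ family in a disjoint factor yields rank at least $r + s - 1$ in the composite (the $-1$ accounting for the possibility that both families contain totally negative elements, which is indeed the case here since each family includes $-1$). Applying this across the $\omega(m)$ prime-power factors costs at most $\omega(m) - 1$ in total, so the resulting signature rank in $\QQ(\zeta_m)^+$ is at least
\[
\sum_{q^n \| m} \bigl( \lfloor \log_2(q^n) \rfloor - 2 \bigr) - (\omega(m) - 1).
\]
Since $\lfloor \log_2(q^n) \rfloor > \log_2(q^n) - 1$ and $\sum \log_2(q^n) = \log_2(m)$, this sum exceeds $\log_2(m) - 3\omega(m) - \omega(m) + 1 = \log_2(m) - 4\omega(m) + 1$, which is the stated bound.

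The final assertion — that the signature rank tends to infinity with $m$ — then follows because $\omega(m) = o(\log m)$ (indeed $\omega(m) = O(\log m / \log\log m)$), so $\log_2(m) - 4\omega(m) + 1 \to \infty$; alternatively and more elementarily, the largest prime-power divisor $q^n$ of $m$ satisfies $q^n \ge m^{1/\omega(m)}$, and a cleaner route is simply to note that if $m \to \infty$ then the largest prime-power divisor $\to \infty$, so already Proposition \ref{prop:pcase} or \ref{prop:2case} applied to that single factor forces the signature rank of the subfield $\QQ(\zeta_{q^n})^+$, hence of $\QQ(\zeta_m)^+$, to grow (using $\delta(F) \le \delta(L)$ from Remark \ref{rem:deficiencies} only guarantees the deficiency comparison, so for the rank statement one uses instead that signatures of units of the subfield inject into signatures of units of the larger field restricted appropriately — more directly, the circular units of the subfield are units of the larger field, so their signature rank in the larger field is at least as large). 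The main obstacle is bookkeeping the constant in Proposition \ref{prop:compositeranks}: one must verify that the linear-disjointness hypothesis genuinely holds at every stage of the iteration and that the $-1$ correction is incurred at most $\omega(m) - 1$ times rather than once per pair, which is exactly what the iterative (as opposed to simultaneous) application of the proposition delivers.
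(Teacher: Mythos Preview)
Your proposal is correct and follows essentially the same approach as the paper: factor $m$ into prime powers, invoke Propositions~\ref{prop:pcase} and~\ref{prop:2case} for the lower bound $\lfloor \log_2(q^n)\rfloor - 2$ in each factor, apply Proposition~\ref{prop:compositeranks} iteratively (losing at most $\omega(m)-1$), and then use $\lfloor x \rfloor > x - 1$ together with $\omega(m) = o(\log m)$. Your write-up is somewhat more explicit than the paper's about the linear-disjointness check and the accounting of the $-1$ corrections, but the argument is the same.
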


\begin{proof}
Write $m = p_1^{a_1} \dots p_k^{a_k}$. Then $\QQ(\zeta_m)^+$ contains
the composite of the totally real fields $\QQ( \zeta_{{p_i}^{a_i}} )^+$, $i = 1, \dots, k$.
By Propositions  \ref{prop:pcase} and \ref{prop:2case}, the signature rank of the circular units
in these latter fields is at least $\lfloor \log_2 ({p_i}^{a_i}) \rfloor - 2$ (and much better
when $p = 2$).  Applying the previous proposition
repeatedly shows that the signature rank of the group generated by the circular units 
is at least
\begin{equation*}
\sum_{i=1}^k \big ( \lfloor \log_2 ({p_i}^{a_i}) \rfloor - 2 \big ) - (k-1).
\end{equation*}
Since $\lfloor \log_2 ({p_i}^{a_i}) \rfloor > \log_2 ({p_i}^{a_i}) - 1$, the lower bound in the
theorem follows.   The final statement in the theorem follows from standard bounds on the growth
of $\omega(m)$ (see, for example, \cite[Section 22.10]{H-W}).
\end{proof}

\begin{corollary}
With the exception of $m = 12$, no maximal real subfield of any cyclotomic field of $m$-th roots of unity 
has a fundamental system of units that are all totally positive.
\end{corollary}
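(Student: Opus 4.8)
The plan is to recast the statement in terms of signature ranks, reduce to prime-power conductors, and then dispose of finitely many small cases. Note first that $\QQ(\zeta_m)^+$ has a fundamental system of units all totally positive exactly when its unit signature group equals $\{(0,\dots,0),(1,\dots,1)\}$, i.e.\ when the unit signature rank is $1$ (equivalently $\delta(\QQ(\zeta_m)^+)=\phi(m)/2-1$): given such a system, multiply each $\epsilon_i$ by $-1$ if it is totally negative to make it totally positive; conversely, if the unit signature group has rank $1$ it contains the signature $(1,\dots,1)$ of $-1$ and no other element, so every unit is totally positive or totally negative and a fundamental system may be chosen totally positive. Since the signature of $-1$ is always $(1,\dots,1)\ne(0,\dots,0)$, the unit signature rank is always $\ge 1$; hence it suffices to show it is $\ge 2$ for every admissible $m$ (odd or divisible by $4$) with $\QQ(\zeta_m)^+\ne\QQ$ and $m\ne 12$, and then to check the excluded values of $m$ directly.

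The reduction rests on a monotonicity principle: if $F\subseteq L$ are totally real fields and $\alpha_1,\dots,\alpha_r\in F^*$ have $\FF_2$-linearly independent signatures in $V_{\infty,F}$, then their signatures in $V_{\infty,L}$ are still $\FF_2$-linearly independent. Indeed, since $L$ is totally real every real embedding of $F$ extends to a real embedding of $L$, so any $\alpha\in F^*$ that is totally positive in $L$ is already totally positive in $F$; thus a nontrivial $\FF_2$-relation among the signatures in $V_{\infty,L}$ would yield one in $V_{\infty,F}$. Applying this with $F=\QQ(\zeta_q)^+\subseteq\QQ(\zeta_m)^+=L$ for any prime power $q\mid m$, and using that the circular units of $\QQ(\zeta_q)$ lie in $\QQ(\zeta_q)^+$ and hence are units of $\QQ(\zeta_m)^+$, shows that the unit signature rank of $\QQ(\zeta_m)^+$ is at least the circular unit signature rank of $\QQ(\zeta_q)^+$, for every prime power $q\mid m$.

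Next I would show the circular unit signature rank of $\QQ(\zeta_q)^+$ is $\ge 2$ for every prime power $q\ge 5$. For $q=2^n$ (necessarily $n\ge 3$) this is Proposition \ref{prop:2case}. For an odd prime power $q=p^n$ it suffices to exhibit two independent rows of the modified signature matrix $M$: the row indexed by $a=1$ is all $1$'s, while the row indexed by $a=2$ has $c_{2,b}'=0$ whenever $q/4<b<q/2$ (since then the least positive residue of $2b$ modulo $q$ is $2b\in(q/2,q)$), and such a $b$ prime to $p$ always exists (immediate for $q$ prime, since $(q/4,q/2)$ then contains an integer and no multiple of $p$, and routine in general). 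Since $c_{2,1}'=1$, the row $a=2$ is nonzero and differs from the all-$1$'s row, so these two rows are $\FF_2$-linearly independent and $M$ has rank $\ge 2$. (Alternatively, invoke Proposition \ref{prop:pcase} for odd $q\ge 16$ and compute that $M$ has rank $2,3,3,5,6$ for $q=5,7,9,11,13$.)

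Finally I would identify the admissible $m$ divisible by no prime power $\ge 5$: the $2$-part of such an $m$ is $1$ or $4$ (as $m$ is odd or divisible by $4$), and each odd prime-power part must equal $3$, so $m\in\{1,3,4,12\}$. For $m\in\{1,3,4\}$ one has $\QQ(\zeta_m)^+=\QQ$, which has no units of infinite order---these are the degenerate cases. For $m=12$, $\QQ(\zeta_{12})^+=\QQ(\sqrt 3)$, whose fundamental unit $2+\sqrt 3$ has norm $1$ and positive conjugate $2-\sqrt 3$ and is therefore totally positive; so $m=12$ genuinely possesses a totally positive fundamental system of units, with unit signature rank $1$. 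For every other admissible $m$ with $\QQ(\zeta_m)^+\ne\QQ$, some prime power $\ge 5$ divides $m$, so by the previous two paragraphs $\QQ(\zeta_m)^+$ has unit signature rank $\ge 2$ and no totally positive fundamental system. The only step requiring any care is this last case-count, together with the elementary observation that $(q/4,q/2)$ contains an integer prime to $p$ for every odd prime power $q=p^n\ge 5$; everything else follows from Propositions \ref{prop:pcase} and \ref{prop:2case} and the monotonicity above.
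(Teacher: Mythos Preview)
Your proof is correct and follows essentially the same route as the paper's: show the circular unit signature rank is at least $2$ for every prime power $q\ge 5$ (hence, by the monotonicity you spell out, for every admissible $m$ divisible by such a $q$), then check $m\in\{3,4,12\}$ by hand. The only difference is cosmetic: you handle all odd prime powers $q\ge 5$ uniformly by exhibiting the independent rows $a=1$ and $a=2$ of $M$, whereas the paper verifies $q\in\{8,9,5,7,11,13\}$ by direct computation and invokes Proposition~\ref{prop:pcase} for primes $p\ge 17$.
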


\begin{proof}
The signature rank of the circular units is at least 2 for $m = 2^3, 3^2, 5, 7, 11, 13$ by direct computation and
for all $m = p$ for primes $p \ge 17$ by Proposition \ref{prop:pcase}.  It follows that the signature
rank of the circular units is at least 2 for all $m$ divisible by $2^3, 3^2$ or any odd prime $p \ge 5$.  The only
remaining possible values for $m$ are $m = 3, 4, 12$.  The first two have no units of infinite order, and the third
has maximal real subfield $\QQ(\sqrt{3})$ with totally positive fundamental unit $2 + \sqrt 3$.  
\end{proof}

\section{Signatures in Cyclotomic Towers over Cyclotomic Fields} \label{sec:towers}

Computations suggest that the signature rank of the units in the real subfield of
the cyclotomic field of $m$-th roots of unity is in fact always
close to the maximal possible rank of $\phi(m)/2$ (equivalently, the unit signature rank
deficiency for these fields should be close to 0), i.e., nearly all possible signature types arise for units.
This is in keeping with the heuristics
in \cite{D-V} suggesting that `most' totally real fields have nearly maximal unit signature rank
(although these abelian extensions are hardly `typical').  

In this section we prove that for infinitely many different families of cyclotomic fields the units 
do indeed have nearly maximal
signature rank.  We do this by showing the 
unit signature rank deficiency is bounded in (finitely many composites of)
prime power cyclotomic towers over cyclotomic fields.

\begin{theorem} \label{thm:2}
Suppose $p_1, \dots , p_s$ ($s \ge 1$) are distinct odd primes and suppose $m$ 
is any positive integer that is either odd or divisible by 4 and that is relatively prime to $p_1, \dots , p_s$.  

Let $\delta(m; n_1, \dots , n_s) = \delta( \QQ(\zeta_{m p_1^{n_1} \dots p_s^{n_s}})^+ ) \ge 0$ denote
the unit signature rank deficiency of the maximal real subfield of the cyclotomic field of 
$m p_1^{n_1} \dots p_s^{n_s}$-th roots of unity defined in \S 2, i.e., the
nonnegative difference between the signature rank of the units
of $\QQ(\zeta_{m p_1^{n_1} \dots p_s^{n_s}})^+ $ and its maximum possible value $\phi( m p_1^{n_1} \dots p_s^{n_s})/2$.
Then

\begin{enumerate}

\item[(a)]
$\delta(m; n_1, \dots , n_s) \le \delta(m; n'_1, \dots , n'_s)$ if $n_i \le n'_i$ for all $1 \le i \le s$,

\item[(b)]
$\delta(m; n_1, \dots , n_s)$ is bounded independent of $n_1, \dots , n_s$, and

\item[(c)]
$\delta(m; n_1, \dots , n_s)$ is constant (depending on $m$) if $n_1, \dots , n_s$ are all sufficiently large.

\end{enumerate}

\end{theorem}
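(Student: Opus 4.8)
The plan is to reduce everything to a statement about an increasing, bounded sequence of nonnegative integers stabilizing. First I would establish part (a). If $n_i \le n'_i$ for all $i$, then $m p_1^{n_1}\cdots p_s^{n_s}$ divides $m p_1^{n'_1}\cdots p_s^{n'_s}$, so $\QQ(\zeta_{m p_1^{n_1}\cdots p_s^{n_s}})^+ \subseteq \QQ(\zeta_{m p_1^{n'_1}\cdots p_s^{n'_s}})^+$ is an extension of totally real fields; then Remark \ref{rem:deficiencies} (the Edgar--Mollin--Peterson monotonicity $\delta(F) \le \delta(L)$ for $L/F$ totally real) gives the inequality immediately. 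Once (a) and (b) are in hand, (c) is automatic: a nondecreasing sequence (in the componentwise partial order) of nonnegative integers that is bounded above must be eventually constant.

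So the heart of the matter is (b): bounding $\delta(m; n_1,\dots,n_s)$ independently of the $n_i$. The deficiency is $[\QQ(\zeta_{mp_1^{n_1}\cdots p_s^{n_s}})^+:\QQ]$ minus the rank of the unit signature group, and the unit signature rank is at least the rank of the circular unit signature group. So it suffices to show that the circular units of $\QQ(\zeta_{mp_1^{n_1}\cdots p_s^{n_s}})^+$ already have signature rank within a bounded distance of $\phi(mp_1^{n_1}\cdots p_s^{n_s})/2$. The key structural input should be that going up one step in an odd prime-power cyclotomic tower, say from $\QQ(\zeta_{Np^n})^+$ to $\QQ(\zeta_{Np^{n+1}})^+$, multiplies the degree by $p$, and the new circular units (those coming from the degree-$p$ "layer") should contribute almost $p$ times as many independent signatures as before, losing only a bounded amount at each stage that, crucially, does \emph{not} accumulate. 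I would make this precise by a Galois-module argument: $\Gal(\QQ(\zeta_{Np^{n+1}})^+/\QQ(\zeta_{N})^+)$ contains a large $p$-group, and over $\FF_2$ a $p$-group (for $p$ odd) acts with only the trivial irreducible constituent, so the circular unit signature space, as an $\FF_2[\Gal]$-module, is forced to be close to the whole of $V_{\infty}$. This is exactly the mechanism behind the Garbanati/Hasse-style arguments cited after Proposition \ref{prop:2case}, and I would adapt it: the co-rank of the signature space is controlled by the failure at the "bottom" finite level $\QQ(\zeta_m)^+$ together with a bounded genus-theoretic / ramification contribution, none of which grows with the $n_i$.

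The main obstacle I expect is precisely making the "bounded loss that does not accumulate" claim rigorous across a multi-variable tower. Handling one odd prime tower over a fixed base $\QQ(\zeta_m)^+$ via the $\FF_2[p\text{-group}]$-representation trick is clean; the difficulty is that in $\QQ(\zeta_{mp_1^{n_1}\cdots p_s^{n_s}})^+$ one must run the argument simultaneously (or iteratively via Proposition \ref{prop:compositeranks}) over several independent towers while keeping the error term uniform. The plausible route is induction on $s$: fix the base $F_0 = \QQ(\zeta_{m p_2^{n_2}\cdots p_s^{n_s}})^+$, by the inductive hypothesis its circular unit signature deficiency is bounded by some $c(m; p_2,\dots,p_s)$ independent of $n_2,\dots,n_s$, then climb the $p_1$-tower over $F_0$ and show the deficiency stays bounded by $c(m;p_2,\dots,p_s) + c'$ for an absolute constant $c'$ depending only on $p_1$ (and perhaps $m$), using that the relevant Galois group of the $p_1$-layer is a $p_1$-group and hence invisible to $\FF_2$-representation theory. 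The bookkeeping needed to show $c'$ genuinely does not depend on the already-ascended towers — i.e.\ that the circular units of the larger field restrict well and that the "new" circular units in the $p_1$-direction behave as in the one-variable case — is where the real work lies, and is where I would expect to invoke the detailed description of circular units in composite cyclotomic fields from \cite[Chapter 8]{W}.
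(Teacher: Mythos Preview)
Your handling of (a) and (c) matches the paper exactly: monotonicity of the deficiency under totally real extensions (Remark~\ref{rem:deficiencies}) gives (a), and (c) is then automatic once (b) is known.

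For (b), however, your proposed mechanism rests on a false statement. You write that ``over $\FF_2$ a $p$-group (for $p$ odd) acts with only the trivial irreducible constituent.'' This is exactly backwards: that statement holds for $p$-groups in characteristic~$p$ (and is the reason the Hasse/Garbanati argument works for the $2$-power tower, where the Galois group is a $2$-group and we are over $\FF_2$). For $p$ odd, $\FF_2[\ZZ/p^k\ZZ]$ is \emph{semisimple} by Maschke's theorem, and it has plenty of nontrivial irreducibles (e.g.\ $\FF_2[\ZZ/3\ZZ]\cong\FF_2\times\FF_4$). So the representation-theoretic leverage you are counting on simply is not there in the odd-prime towers, and with it collapses the claim that the ``bounded loss does not accumulate.'' Indeed, the paper itself remarks (after Corollary~\ref{cor:primepower}) that whether the \emph{circular} unit signature deficiency stays bounded in odd prime-power towers is ``an extremely interesting question''---i.e., open---so your route through circular units would have to prove something the authors do not know how to prove.

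The paper's proof of (b) avoids circular units entirely and instead goes through class field theory. One first observes that $2^{\delta(L^+)}$ divides the strict class number of $L^+$, which in turn divides twice the class number of the CM field $L=\QQ(\zeta_{mp_1^{n_1}\cdots p_s^{n_s}})$ (because $L H_{L^+}^{\textup{st}}/L$ is unramified at finite primes). Then the deep input is Friedman's theorem (extending Washington) that in the multiple $\ZZ_{p_1}\times\cdots\times\ZZ_{p_s}$-extension of the abelian field $\QQ(\zeta_{mp_1\cdots p_s})$, the $2$-part of the class number is bounded (and eventually constant). This bounds the $2$-part of the strict class number of $L^+$, hence bounds $\delta(L^+)$, independently of $n_1,\dots,n_s$.
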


\begin{proof}
The maximal real subfield of the cyclotomic field of $m p_1^{n_1} \dots p_s^{n_s}$-th roots of unity is a
subfield of the maximal real subfield of the cyclotomic field of $m p_1^{n'_1} \dots p_s^{n'_s}$-th roots of unity
if $n_i \le n'_i$, $i = 1, \dots ,s$, so (a) follows immediately from Remark \ref{rem:deficiencies}.

Suppose that $n_i \ge 1$ for $1 \le i \le s$ and let $L$ denote the cyclotomic field $\QQ(\zeta_{m p_1^{n_1} \dots p_s^{n_s}})$, 
with maximal real subfield $L^+ = \QQ(\zeta_{m p_1^{n_1} \dots p_s^{n_s}})^+ $. 
Then the strict class number of $L^+ $ divides twice the class number of $L$, as follows.
The quadratic extension $L/L^+$ is ramified at a finite prime (if $m = s = 1$) or is
ramified only at infinity (otherwise).  Hence, if $H_{L^+}^{\textup{st}}$ is the strict Hilbert class field of $L^+$,
then the degree over $L$ of the composite $L H_{L^+}^{\textup{st}}$ is either 
the strict class class number of $L^+$ (if $m = s = 1$) or half that (otherwise).
Since $L H_{L^+}^{\textup{st}}$ is an abelian extension of $L$ that is unramified at finite primes, it is contained 
in the Hilbert class field of the complex field $L$, so $ [ L H_{L^+}^{\textup{st}} : L]$ divides the
class number of $L$, which gives the desired divisibility.

Next observe that the cyclotomic fields $\QQ(\zeta_{m p_1^{n_1} \dots p_s^{n_s}})$ with $n_i \ge 1$ for $1 \le i \le s$
are the subfields of the composite of the cyclotomic $\ZZ_{p_i}$-extensions of $\QQ(\zeta_{m p_1 \dots p_s})$ 
for $1 \le i \le s$.  
Hence the 2-primary part of the class number of $\QQ(\zeta_{m p_1^{n_1} \dots p_s^{n_s}})$
is bounded for all $s$-tuples $(n_1, \dots , n_s)$ and is
constant if $n_1, \dots , n_s$ are all sufficiently large 
by a theorem of Friedman (\cite{F}) extending a result of Washington.  
By the previous observation, the strict class number of $L^+ = \QQ(\zeta_{m p_1^{n_1} \dots p_s^{n_s}})^+ $
is therefore bounded for all $s$-tuples $(n_1, \dots , n_s)$ (and is in fact 
constant if $n_1, \dots , n_s$ are all sufficiently large).

Finally, since the strict class number of $L^+$ is the product of the usual class number 
of $L^+$ with $2^{\delta(L^+)}$, we obtain (b).  Then by (a), we obtain (c).
\end{proof}

\begin{corollary} \label{cor:cor1}
With notation as in Theorem \ref{thm:2}, the unit signature rank deficiencies
for all totally real abelian fields $F$ whose conductor is a product of a divisor of $m$ with an integer whose prime divisors are 
among the set $\{ p_1, \dots , p_s \}$, are uniformly bounded.  
\end{corollary}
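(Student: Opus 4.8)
The plan is to deduce Corollary~\ref{cor:cor1} from Theorem~\ref{thm:2} together with the monotonicity of the deficiency under extension of totally real fields recorded in Remark~\ref{rem:deficiencies}. First I would observe that any totally real abelian field $F$ whose conductor is a product of a divisor of $m$ with an integer all of whose prime factors lie in $\{p_1,\dots,p_s\}$ is, by the Kronecker--Weber theorem, contained in $\QQ(\zeta_N)$ for some $N = m' p_1^{n_1}\cdots p_s^{n_s}$ with $m'\mid m$ (and with $m'$ taken to be odd or divisible by $4$, enlarging by a factor of $4$ if necessary); since $F$ is totally real it is in fact contained in the maximal real subfield $\QQ(\zeta_N)^+$.

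The key step is then simply: by Remark~\ref{rem:deficiencies}, since $F \subseteq \QQ(\zeta_N)^+$ and both are totally real, $\delta(F) \le \delta(\QQ(\zeta_N)^+) = \delta(m'; n_1, \dots, n_s)$. By Theorem~\ref{thm:2}(b) applied with $m$ replaced by $m'$, the quantity $\delta(m'; n_1,\dots,n_s)$ is bounded independently of $n_1,\dots,n_s$; call this bound $B(m')$. Since there are only finitely many divisors $m'$ of $m$ (equivalently, of $4m$), the maximum $B = \max_{m' \mid 4m} B(m')$ is a finite constant depending only on $m$ and $p_1,\dots,p_s$, and $\delta(F) \le B$ for every $F$ in the stated family. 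That gives the uniform bound.

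The only real subtlety — and the step I would be most careful about — is the bookkeeping around conductors: one must check that "conductor a product of a divisor of $m$ with a $\{p_1,\dots,p_s\}$-number" really does force containment in some $\QQ(\zeta_{m'p_1^{n_1}\cdots p_s^{n_s}})^+$ with $m' \mid m$ and $m'$ admissible (odd or divisible by $4$), and that the finitely many values $m'$ that arise are controlled by $m$ alone. This is routine given that the conductor of $F$ divides a number of the required shape and that adjoining a fourth root of unity only enlarges the ambient cyclotomic field; no new arithmetic input beyond Theorem~\ref{thm:2} is needed. I would present the argument in two or three sentences: embed $F$ into the appropriate real cyclotomic field, apply the Edgar--Mollin--Peterson monotonicity from Remark~\ref{rem:deficiencies}, and invoke Theorem~\ref{thm:2}(b) together with the finiteness of the set of divisors of $m$.
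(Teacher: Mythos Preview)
Your proposal is correct and follows essentially the same approach as the paper: embed $F$ into an appropriate real cyclotomic field, apply the monotonicity from Remark~\ref{rem:deficiencies}, and invoke Theorem~\ref{thm:2}(b). The paper streamlines one step you make slightly harder than necessary: since $d \mid m$ implies $\QQ(\zeta_{d\,p_1^{n_1}\cdots p_s^{n_s}})^+ \subseteq \QQ(\zeta_{m\,p_1^{n_1}\cdots p_s^{n_s}})^+$, one can embed $F$ directly into the latter and apply Theorem~\ref{thm:2}(b) once with the fixed $m$, so there is no need to introduce $m'$, adjust by a factor of $4$, or take a maximum over the divisors of $m$.
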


\begin{proof}
Any totally real abelian field $F$ having conductor $d p_1^{n_1} \dots p_s^{n_s}$ where $d$ is a divisor of $m$ and
with $n_i \ge 0$ for $1 \le i \le s$ is
contained in  $\QQ(\zeta_{m p_1^{n_1} \dots p_s^{n_s}})^+ $.  Hence $\delta(F) \le \delta(m; n_1,\dots,n_s)$ by
Remark \ref{rem:deficiencies}, and the result follows immediately from Theorem \ref{thm:2}.
\end{proof}

\begin{remark}
Corollary \ref{cor:cor1} shows that 
among all the abelian fields whose conductors are supported in a fixed finite set of primes, almost all have
nearly maximal unit signature rank (in the precise sense that the signature rank deficiencies are uniformly bounded by a
constant depending only on the set of primes chosen).  
\end{remark}

We highlight some particular special cases:
 
\begin{corollary} \label{cor:basicZp}
Let $k$ be a finite totally real abelian extension of $\QQ$.  For $p$ an odd prime, let $k^{p,\infty}$ denote the
cyclotomic $\ZZ_p$-extension of $k$.  If $k_n$ is the subfield of $k^{p,\infty}$ of degree $p^n$ over $k$,
then the signature rank of the units of $k_n$ differs from $[k_n:\QQ]$ by a constant amount for $k_n$ sufficiently far up
the tower.
\end{corollary}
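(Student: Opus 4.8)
The plan is to deduce this directly from Corollary~\ref{cor:cor1} together with the monotonicity of the deficiency under totally real extensions recalled in Remark~\ref{rem:deficiencies}. First I would pin down where the tower sits: since $k$ is a finite totally real abelian extension of $\QQ$, it is contained in $\QQ(\zeta_f)^+$ for $f$ its conductor; write $f = m p^a$ with $p \nmid m$ and $a \ge 0$. The cyclotomic $\ZZ_p$-extension $k^{p,\infty}$ is, by definition, the composite of $k$ with the cyclotomic $\ZZ_p$-extension of $\QQ$, whose $n$-th layer is the real field $\QQ(\zeta_{p^{n+1}})^+$. Hence $k_n$ is contained in $\QQ(\zeta_{m p^{\max(a,\,n+1)}})^+$, so the conductor of $k_n$ is a product of a divisor of $m$ with a power of $p$.

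Next I would apply Corollary~\ref{cor:cor1} with $s = 1$, $p_1 = p$, and ``$m$'' taken to be the prime-to-$p$ part of the conductor of $k$ chosen above (which is legitimate precisely because this $m$ is coprime to $p$). That corollary then gives a constant $C$, depending only on $k$ and $p$, with $\delta(k_n) \le C$ for every $n \ge 0$. (Equivalently, one could bound $\delta(k_n) \le \delta(m;\max(a,n+1))$ and invoke Theorem~\ref{thm:2}(b) directly.)

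Finally I would use monotonicity: since $k_n \subseteq k_{n+1}$ is an extension of totally real fields, Remark~\ref{rem:deficiencies} gives $\delta(k_n) \le \delta(k_{n+1})$. Thus $\big(\delta(k_n)\big)_{n \ge 0}$ is a nondecreasing sequence of nonnegative integers bounded above by $C$, hence eventually equal to some constant $\delta_\infty$. Because $\delta(k_n) = [k_n:\QQ] - (\text{unit signature rank of }k_n)$ by the definition of the deficiency in \S2, it follows that for all sufficiently large $n$ the unit signature rank of $k_n$ equals $[k_n:\QQ] - \delta_\infty$, which is exactly the assertion.

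There is essentially no genuine obstacle here; the whole statement is a packaging of Corollary~\ref{cor:cor1} with the ``bounded monotone integer sequence is eventually constant'' observation. The only points demanding a little care are the treatment of the $p$-part of the conductor of $k$ (so that the hypothesis of Corollary~\ref{cor:cor1} is met with a base coprime to $p$), and the verification that the layers of $k^{p,\infty}$ really are absorbed into the fields $\QQ(\zeta_{m p^{N}})^+$ as $N \to \infty$; both are routine.
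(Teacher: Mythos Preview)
Your proposal is correct and is essentially the argument the paper has in mind: Corollary~\ref{cor:basicZp} is stated as a ``particular special case'' with no separate proof, the intended derivation being exactly your combination of the uniform bound from Corollary~\ref{cor:cor1}/Theorem~\ref{thm:2}(b) with the monotonicity of Remark~\ref{rem:deficiencies} (mirroring the proof of Theorem~\ref{thm:2}(c)). Your care about stripping the $p$-part from the conductor so that the base $m$ is coprime to $p$ (and automatically odd or divisible by $4$, since conductors are) is precisely the only bookkeeping needed.
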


Applying Corollary \ref{cor:basicZp} to $k = \QQ(\zeta_p)^+$ for $p$ odd, together with Weber's result in Proposition \ref{prop:2case},
gives the following.

\begin{corollary} \label{cor:primepower}
For any prime $p$, the difference between the signature rank of the units of 
$\QQ(\zeta_{p^n})^+$ and $\phi(p^n)/2$ is constant for $n$ sufficiently large (the constant depending on $p$).
\end{corollary}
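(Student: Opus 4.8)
The plan is to derive this result as essentially a direct consequence of Corollary \ref{cor:basicZp} combined with Weber's Proposition \ref{prop:2case}, handling the odd-prime and $p=2$ cases separately. First I would treat the case of an odd prime $p$. Here the cyclotomic field $\QQ(\zeta_{p^n})^+$ sits naturally inside a $\ZZ_p$-tower: take $k = \QQ(\zeta_p)^+$, which is a finite totally real abelian extension of $\QQ$, and observe that its cyclotomic $\ZZ_p$-extension $k^{p,\infty}$ has as its layer of degree $p^{n-1}$ over $k$ precisely the field $\QQ(\zeta_{p^n})^+$ (since $[\QQ(\zeta_{p^n})^+:\QQ] = \phi(p^n)/2 = p^{n-1}(p-1)/2 = p^{n-1}\cdot[k:\QQ]$, and the tower of real cyclotomic fields $\QQ(\zeta_{p^n})^+$ is exactly the cyclotomic $\ZZ_p$-tower over $k$). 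Applying Corollary \ref{cor:basicZp} to this $k$ then immediately gives that the signature rank of the units of $\QQ(\zeta_{p^n})^+$ differs from $[\QQ(\zeta_{p^n})^+:\QQ] = \phi(p^n)/2$ by a constant amount once $n$ is large enough, with the constant depending on $p$ (through the bounded 2-primary part of the class numbers in the tower, via Friedman's theorem as used in the proof of Theorem \ref{thm:2}).

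For the remaining case $p = 2$, Corollary \ref{cor:basicZp} does not apply (it is stated for odd $p$), but here Weber's Proposition \ref{prop:2case} gives something even stronger: for all $n \ge 2$, the signature rank of the circular units of $\QQ(\zeta_{2^n})^+$ is already maximal, equal to $2^{n-2} = \phi(2^n)/2$. Since the circular units form a subgroup of the full unit group, the signature rank of all the units is then also maximal, so the difference from $\phi(2^n)/2$ is identically $0$ — constant (indeed zero) for all $n \ge 2$. Combining the two cases yields the corollary as stated.

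The main (and only real) obstacle is the bookkeeping in the odd case: one must verify carefully that $\QQ(\zeta_{p^n})^+$ is genuinely the $n{-}1$-st layer of the cyclotomic $\ZZ_p$-extension of $\QQ(\zeta_p)^+$ rather than some twisted or partial tower, and that the ``sufficiently large $n$'' in Corollary \ref{cor:basicZp} translates correctly into a statement phrased in terms of $n$ here. This is routine: the cyclotomic $\ZZ_p$-extension of $\QQ$ is $\bigcup_n \QQ(\zeta_{p^{n+1}})^+$ (the unique $\ZZ_p$-subextension of $\bigcup_n \QQ(\zeta_{p^{n+1}})$), and composing with the fixed base $k = \QQ(\zeta_p)^+$ — which is already contained in this union — produces exactly the tower $\QQ(\zeta_{p^2})^+ \subset \QQ(\zeta_{p^3})^+ \subset \cdots$, so no genuine difficulty arises. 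Everything else is immediate from the cited results, so no substantial new argument is needed.
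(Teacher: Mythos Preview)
Your proposal is correct and follows exactly the approach the paper takes: apply Corollary~\ref{cor:basicZp} with $k=\QQ(\zeta_p)^+$ for odd $p$, and invoke Weber's Proposition~\ref{prop:2case} for $p=2$. The extra bookkeeping you spell out (identifying $\QQ(\zeta_{p^n})^+$ as the appropriate layer of the cyclotomic $\ZZ_p$-tower over $\QQ(\zeta_p)^+$) is precisely what the paper leaves implicit.
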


Corollary \ref{cor:primepower} gives another proof of the results in \S 3 that the signature ranks of the units in 
the fields $\QQ(\zeta_{p^n})^+$ tend to infinity as $n$ tends to infinity.
Corollary \ref{cor:primepower} is far superior, asymptotically, to the results in \S 3 for the cyclotomic fields of
odd prime power conductor since it shows the signature rank is `nearly' maximal.  
The result yields relatively little information for any specific $\QQ(\zeta_{p^N})^+$, however, 
since the unit signature rank deficiency for this field 
could conceivably be close to $\phi(p^N)/2$
(although, as mentioned, this is not expected to happen).  The only explicit lower bounds for the
unit signature rank for $\QQ(\zeta_{p^n})$ for odd $p$ and, more significantly, for general $\QQ(\zeta_{m})$ (for 
example if $m$ is the product of distinct primes, for which the results in this section have little to say), are 
those in \S 3.

\begin{remark}
We have done some computations of the signature ranks for the subgroup of circular units in towers of prime power cyclotomic fields.
While the computations are somewhat modest (since $\phi(p^n)$ grows rapidly with $n$), these
computations have exhibited the following behavior:  
if the signature rank of the 
circular units in $\QQ(\zeta_p)^+$ is 
$\frac12 \phi(p) - \delta$
($\delta \ge 0$), 
then the signature rank of the circular units in the fields 
$\QQ(\zeta_{p^n})^+$ is $\frac12 \phi(p^n) - \delta$, i.e., the circular unit signature rank deficiency is 
constant and equal to its value in the first layer. 
Whether this behavior persists in general is an extremely interesting question.

We also note that the deficiency of the circular units is at least the deficiency for the full group of units,
but may be strictly larger: for the field $\QQ(\zeta_{163})^+$ the circular unit deficiency 
is 2, while the deficiency for the full group of units is 0 (see \cite{D}).
\end{remark}

\section{Unit Signature Rank Deficiencies in Cyclotomic Fields}\label{sec:unbounded}

In this section we show that the signature rank deficiency in the maximal real subfields of 
cyclotomic fields can be arbitrarily large,
under the assumption that there exist infinitely many cyclic
cubic extensions having a system of totally positive fundamental units.

Suppose $k$ is a cyclic cubic extension of $\QQ$ with totally positive fundamental units
$\epsilon_1, \epsilon_2$.  If $E_k$ is the unit group of $k$, then 
$E_k = \{ \pm 1 \} \times \langle \epsilon_1, \epsilon_2 \rangle$ and the subgroup
$\langle \epsilon_1, \epsilon_2 \rangle$ consists of the totally positive units in $k$.

If the Galois group $G$ of $k$ is generated by $\sigma$, then 
$\langle \epsilon_1, \epsilon_2 \rangle$ is a module for the quotient 
$\ZZ[G]/(\sigma^2 + \sigma + 1)$ of the group ring $\ZZ[G]$ of $G$ since 
$\epsilon_1$ and $\epsilon_2$ both have norm $+1$.  This quotient of the group ring is isomorphic to 
the ring of integers in $\QQ(\sqrt{-3})$, which is a principal ideal domain, and it follows that
$\langle \epsilon_1, \epsilon_2 \rangle \iso \ZZ[G]/(\sigma^2 + \sigma + 1)$ as $G$-modules (and not just
as abelian groups).

Modulo squares, $\langle \epsilon_1, \epsilon_2 \rangle$ is therefore isomorphic to $\FF_2[G]/(\sigma^2 + \sigma + 1)$ as
a module over the group ring $\FF_2 [G]$, hence affords the unique 
irreducible 2-dimensional representation of $\FF_2 [G]$.  In particular,
$G$ acts irreducibly and with no nontrivial fixed points on $\langle \epsilon_1, \epsilon_2 \rangle$
modulo squares.

With these preliminaries, we consider the composite of cyclic cubic fields having a 
system of totally positive fundamental units:

\begin{proposition} \label{prop:cubiccomposites}
Suppose $k_1, \dots  k_n$ are linearly disjoint cyclic cubic fields, each with a totally positive system of
fundamental units, i.e., with unit signature rank deficiency $\delta(k_i)$ equal to 2, $i =1,\dots, n$.  Then the 
unit signature rank deficiency $\delta(k_1 \dots k_n)$ for the composite field $k_1 \dots k_n$ is at least $2 n$.  
\end{proposition}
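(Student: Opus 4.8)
The plan is to produce $2n$ totally positive units of the composite field $K := k_1 \cdots k_n$ and to show their classes in $E_K/E_K^2$ are linearly independent, combining the group-ring module structure recorded just before the proposition with a descent through the odd-degree extensions $K/k_i$. Recall from \S 2 that $\delta(K) = \dim_{\FF_2}(E_K^+/E_K^2)$, where $E_K$ is the unit group of $K$ and $E_K^+ \subseteq E_K$ is the subgroup of totally positive units; so it suffices to exhibit a $2n$-dimensional subspace of $E_K^+/E_K^2$. As notation: each $k_i$ is totally real (a group of order $3$ contains no complex conjugation), so by linear disjointness $K$ is totally real of degree $3^n$ over $\QQ$ with $G := \Gal(K/\QQ) = G_1 \times \cdots \times G_n$, where $G_i := \Gal(k_i/\QQ) = \langle \sigma_i \rangle$ is cyclic of order $3$. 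Fix a totally positive system of fundamental units $\epsilon_1^{(i)}, \epsilon_2^{(i)}$ of $k_i$; since every real embedding of $K$ restricts to a real embedding of $k_i$, each $\epsilon_j^{(i)}$ is a totally positive unit of $K$.

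Next I would bring in the module structure. The quotient $E_K/E_K^2$ is a module over $\FF_2[G]$, and since $|G| = 3^n$ is odd it is semisimple. Let $W_i := \FF_2[G_i]/(\sigma_i^2 + \sigma_i + 1)$ be the unique $2$-dimensional irreducible $\FF_2[G_i]$-module, regarded as an $\FF_2[G]$-module via the projection $G \twoheadrightarrow G_i$. By the discussion preceding the proposition $\langle \epsilon_1^{(i)}, \epsilon_2^{(i)} \rangle$ is $G_i$-isomorphic to $\ZZ[G_i]/(\sigma_i^2 + \sigma_i + 1)$, so $\langle \epsilon_1^{(i)}, \epsilon_2^{(i)} \rangle / \langle \epsilon_1^{(i)}, \epsilon_2^{(i)} \rangle^2 \cong W_i$, and the composite $\langle \epsilon_1^{(i)}, \epsilon_2^{(i)} \rangle \to E_K \to E_K/E_K^2$ (which kills squares) factors through an $\FF_2[G]$-linear map $f_i : W_i \to E_K/E_K^2$. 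This $f_i$ is nonzero: were $\epsilon_1^{(i)} = \gamma^2$ for some $\gamma \in E_K$, then $k_i(\gamma)/k_i$ would have degree dividing $\gcd(2, [K:k_i]) = \gcd(2, 3^{n-1}) = 1$, forcing $\gamma \in k_i$ and making the fundamental unit $\epsilon_1^{(i)}$ a square in $k_i$, which is impossible. Since $W_i$ is irreducible, $f_i$ is injective; write $\overline{W}_i$ for its image in $E_K/E_K^2$.

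Finally I would assemble the submodules $\overline{W}_1, \dots, \overline{W}_n$. They are pairwise non-isomorphic irreducible $\FF_2[G]$-modules, because the factor $G_i$ acts on $W_i$ without nonzero fixed points but acts trivially on $W_j$ for $j \neq i$. Hence their sum inside $E_K/E_K^2$ is direct; concretely, the idempotent $e_i := 1 + \sum_{g \in G_i} g \in \FF_2[G]$ acts as the identity on $\overline{W}_i$ and as $0$ on each $\overline{W}_j$ with $j \neq i$, so a relation $\sum_l w_l = 0$ with $w_l \in \overline{W}_l$ forces $w_i = e_i\big(\sum_l w_l\big) = 0$ for every $i$. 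Therefore $\overline{W}_1 \oplus \cdots \oplus \overline{W}_n$ is a subspace of $E_K^+/E_K^2$ of dimension $2n$, and so $\delta(K) = \dim_{\FF_2}(E_K^+/E_K^2) \ge 2n$.

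The one step requiring genuine care — and the only place, beyond the cubic-field preliminaries, where the hypotheses are used — is the claim that each $W_i$ injects into $E_K/E_K^2$ rather than being partially killed, which is precisely the odd-degree descent along $k_i(\sqrt{\epsilon_1^{(i)}}) \subseteq K$ with $[K:k_i] = 3^{n-1}$ odd. Everything else is formal and takes place inside the semisimple group ring $\FF_2[G]$.
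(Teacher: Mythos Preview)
Your proof is correct and rests on the same two ingredients as the paper's---the odd-degree descent $k_i(\sqrt{\epsilon})\subseteq K$ and the fact that $G_i$ has no nonzero fixed points on $W_i$---but the packaging is different. The paper argues by induction on $n$: given $\epsilon'_1,\dots,\epsilon'_{2s}$ independent mod squares in $k_1\cdots k_s$ and $\epsilon_1,\epsilon_2$ in $k_{s+1}$, it supposes $\epsilon\epsilon'=\alpha^2$ in $k_1\cdots k_{s+1}$, applies a lift $\sigma$ of a generator of $\Gal(k_{s+1}/\QQ)$ trivial on $k_1\cdots k_s$ to get $\sigma(\epsilon)/\epsilon$ a square, descends this to $k_{s+1}$ by the odd-degree argument, invokes the no-fixed-point property to force $\epsilon$ to be a square in $k_{s+1}$, and then descends $\epsilon'$ to a square in $k_1\cdots k_s$. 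You instead treat all $n$ fields at once: the odd-degree descent shows each $f_i$ is injective, and the idempotents $e_i=1+\sum_{g\in G_i}g$ (which encode the no-fixed-point property) separate the images $\overline{W}_i$ directly. Your route is a bit more conceptual and visibly generalizes to composites of linearly disjoint abelian fields of any odd prime degree with maximal deficiency; the paper's inductive route is marginally more elementary in that it never names the group ring or its idempotents.
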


\begin{proof}
We need to prove there are at least $2n$ totally positive units in $k_1 \dots k_n$ that are 
multiplicatively independent modulo squares (in $k_1 \dots k_n$).  
We proceed by induction on $n$, the case $n = 1$ being trivial.  Suppose by induction that the composite 
$k_1 \dots k_s$ contains $2 s$ totally positive units 
$\epsilon'_1 , \dots , \epsilon'_{2s}$ that are multiplicatively independent modulo squares in $k_1 \dots k_s$. 
By assumption, the field $k_{s+1}$ contains two totally positive units $\epsilon_1, \epsilon_2$ that 
are multiplicatively independent modulo squares in $k_{s+1}$.

Suppose $\epsilon \in \langle \epsilon_1, \epsilon_2 \rangle$ 
and $\epsilon' \in \langle \epsilon'_1 , \dots , \epsilon'_{2s} \rangle$  with 
$\epsilon \epsilon' = \alpha^2$ for some $\alpha$ in the composite field $F = k_1 \dots k_s k_{s+1}$.

Let $\sigma \in \Gal (F/\QQ) $ be a lift of a generator for the cyclic group $\Gal (k_{s+1}/\QQ)$
that is the identity on $k_1 \dots k_s$.  Then
$\sigma(\epsilon) \epsilon' = \sigma(\alpha)^2$, so $\sigma(\epsilon)/\epsilon =  (\sigma(\alpha)/\alpha)^2$
is a square in $F$, hence $k_{s+1} (\sqrt{ \sigma(\epsilon)/\epsilon })$
is a subfield of $F$.  Since $F$
has degree $3^s$ over $k_{s+1}$, $k_{s+1} (\sqrt{ \sigma(\epsilon)/\epsilon })$ cannot be a quadratic extension, so
$\sigma(\epsilon)/\epsilon$ is in fact a square in $k_{s+1}$.  Since $\sigma$ acting on 
$\langle \epsilon_1, \epsilon_2 \rangle$ modulo squares in $k_{s+1}$ has no nontrivial fixed point, it follows that
$\epsilon$ is the square of a unit in $k_{s+1}$.  
Then $\epsilon' = \alpha^2 /\epsilon$ would be a square in $F$, a cubic extension of $k_1 \dots k_s$, and,
as before, this implies that $\epsilon'$ would be a square in $k_1 \dots k_s$.

This shows that the totally positive units $\epsilon_1, \epsilon_2, \epsilon'_1 , \dots , \epsilon'_{2s}$ are multiplicatively independent
modulo squares in $F$, completing the proof by induction. 
\end{proof}

For a cyclic cubic field, either there is a totally positive system of fundamental units or the
units have all possible signatures.  Heuristics, supported by computations, in \cite{B-V-V} suggest
that, when counted by discriminant, there is a positive proportion of cyclic cubic fields of either type.  
Roughly 3\% of cyclic cubic fields (see \cite{B-V-V} for the precise value) are predicted to have
unit signature rank deficiency 2, so in particular there should exist infinitely many such cubic fields that are
linearly disjoint.

\begin{theorem} \label{thm:3}
Suppose, as expected, that there exist infinitely many cyclic cubic fields having a totally positive system of fundamental units.
Then the difference between $\phi(m)/2$ and the unit signature rank of $\QQ(\zeta_m)^+$ can be arbitrarily large. 
\end{theorem}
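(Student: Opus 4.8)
The plan is to combine Proposition~\ref{prop:cubiccomposites} with Remark~\ref{rem:deficiencies} and the Kronecker--Weber theorem. Fix an arbitrary integer $n \ge 1$; it suffices to exhibit a modulus $m$ (odd or divisible by $4$) with $\delta(\QQ(\zeta_m)^+) \ge 2n$, since by the definition in~\S2 the quantity $\delta(\QQ(\zeta_m)^+)$ is precisely the difference between $\phi(m)/2$ and the unit signature rank of $\QQ(\zeta_m)^+$, and letting $n$ grow then yields the theorem.

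First I would select $n$ mutually linearly disjoint cyclic cubic fields $k_1, \dots, k_n$, each possessing a totally positive system of fundamental units. Under the hypothesis there are infinitely many cyclic cubic fields with this property, so one can build the list inductively: having chosen $k_1, \dots, k_s$, the composite $E = k_1 \cdots k_s$ is a finite extension of $\QQ$ and hence contains only finitely many cubic subfields, so among the infinitely many admissible cyclic cubic fields there is one, call it $k_{s+1}$, with $k_{s+1} \not\subseteq E$. Since $[k_{s+1}:\QQ] = 3$ is prime, $k_{s+1} \cap E$ is a proper subfield of $k_{s+1}$, hence equals $\QQ$; because $E/\QQ$ is abelian (so Galois), $k_{s+1} \cap E = \QQ$ forces $k_{s+1}$ and $E$ to be linearly disjoint over $\QQ$. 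Iterating, $k_1, \dots, k_n$ are mutually linearly disjoint, and each has $\delta(k_i) = 2$ by the discussion preceding Proposition~\ref{prop:cubiccomposites}.

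Next let $F = k_1 \cdots k_n$. Each $k_i$, being Galois over $\QQ$ of odd degree, is totally real, so $F$ is a totally real abelian extension of $\QQ$; by Kronecker--Weber $F \subseteq \QQ(\zeta_m)$ for some integer $m$, which we may take odd or divisible by $4$, and then $F \subseteq \QQ(\zeta_m) \cap \RR = \QQ(\zeta_m)^+$. Proposition~\ref{prop:cubiccomposites} gives $\delta(F) \ge 2n$, and Remark~\ref{rem:deficiencies} applied to the finite extension $\QQ(\zeta_m)^+ / F$ of totally real fields gives $\delta(\QQ(\zeta_m)^+) \ge \delta(F) \ge 2n$. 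As $n$ was arbitrary, the deficiency of $\QQ(\zeta_m)^+$, equivalently the difference between $\phi(m)/2$ and the unit signature rank of $\QQ(\zeta_m)^+$, is unbounded over admissible $m$.

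There is essentially no obstacle beyond the conditional hypothesis itself: the substantive work is packaged into Proposition~\ref{prop:cubiccomposites} (where the $\FF_2$-representation-theoretic argument lives) and into Remark~\ref{rem:deficiencies}. The only point needing any care is the inductive construction of mutually linearly disjoint cyclic cubic fields with the totally positive unit property, and---if one wants $m$ explicitly rather than merely its existence---keeping track of the conductor of $F$: a cyclic cubic field has conductor of the form $9^{a}p_1 \cdots p_t$ with $a \in \{0,1\}$ and each $p_i \equiv 1 \pmod 3$, so one may take $m$ to be (a divisor of) the product of the conductors of $k_1, \dots, k_n$, times $9$ if desired to ensure $m$ is admissible.
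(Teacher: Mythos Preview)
Your proof is correct and follows essentially the same approach as the paper: both invoke Proposition~\ref{prop:cubiccomposites} to get $\delta(k_1\cdots k_n)\ge 2n$ and then Remark~\ref{rem:deficiencies} to pass to the containing real cyclotomic field. You have simply spelled out details the paper leaves implicit, namely the inductive selection of linearly disjoint cyclic cubic fields and the Kronecker--Weber embedding (the paper instead just names the conductor directly as the product of the relevant primes).
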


\begin{proof}
By Proposition \ref{prop:cubiccomposites} and Remark \ref{rem:deficiencies}, to obtain a unit signature rank deficiency 
at least $2n$ it suffices to take the 
cyclotomic field whose conductor is the 
product of the distinct primes (which are congruent to 1 mod 3) dividing 
the conductors of $n$ linearly disjoint cyclic cubic 
fields having totally positive fundamental units.   
\end{proof}

\begin{remark}
The same sort of arguments could be applied to composites of other abelian fields of, for example, odd prime degree.
As in the case for cubic fields, it is expected that there are 
infinitely many such cyclic extensions of $\QQ$ with nonzero deficiencies (see \cite{B-V-V} for specific predictions).
This suggests that the unit signature rank deficiency can increase without bound as one moves `horizontally' among cyclotomic 
fields, that is, over fields $\QQ(\zeta_m)^+$ where $m$ is the product of an increasing number of distinct primes, as opposed to
the results of Section \ref{sec:towers} which show the deficiency is bounded as one moves `vertically' among cyclotomic fields. 
\end{remark}

\section{Remarks on 2-adic Unit Signature Rank Deficiencies} \label{sec:unbounded}

The results above have implications for analogous deficiencies for the `2-adic signatures' of 
units in the sense of \cite[Section 4]{D-V}, which we now very briefly outline. We use the 
notation of \cite{D-V}.  

If $F$ is a totally real field with $[F:\QQ] = n$ then there is a structure theorem for the 
image of the 2-Selmer group, $\Sel_2(F)$, under the 2-Selmer signature map $\phi$ (\cite[Theorem A.13]{D-V}).
The space $\phi(\Sel_2(F))$ is $n$-dimensional over $\FF_2$ and is an orthogonal direct sum
$U \perp S \perp U'$, where $U$ is the subspace of elements whose 2-adic signature is trivial, 
$U'$ the subspace of elements whose archimedean signature is trivial, and $S$ is a diagonal subspace.
Since $F$ is totally real, the dimension of $U'$ is the same as the dimension of $U$ (\cite[Theorem A.13(a)]{D-V}) and 
so $S$ has dimension $n - 2 \dim(U)$.

Suppose now that the unit signature rank deficiency of $F$ is $\delta(F)$.  
Then the set of signatures of units is a subspace of dimension $n - \delta(F)$.  
It follows that the dimension of $U$ is at most $\delta(F)$ (so $S$ has dimension at least $n - 2 \delta(F)$)
and that the dimension of the image $\phi(E_F)$ of the units $E_F$ of $F$ is at least $n - \delta(F)$.  
Then $\dim ( \phi(E_F) + S )$ is at most $\dim \phi(\Sel_2(F)) = n$ and 
\begin{align*}
\dim ( \phi(E_F) \cap S ) & = \dim \phi(E_F)   +  \dim S  -  \dim ( \phi(E_F) + S )  \\
& \ge (n - \delta(F)) + (n - 2 \delta(F)) - n = n - 3 \delta(F).
\end{align*}
Since $S$ is a diagonal subspace, it follows that the dimension of the subspace of 2-adic signatures of the
units of $F$ is at least $n - 3 \delta(F)$.  Hence the  2-adic signature deficiency of the units of $F$ is
at most $3 \delta(F)$.

As a consequence, Theorem \ref{thm:2}(b) and Corollaries \ref{cor:cor1}, \ref{cor:basicZp}, and \ref{cor:primepower}
remain true if the (archimedean) signature rank of the units is replaced by the 2-adic signature rank of the units.


\end{document}